\newcommand{\even}{\textnormal{{\tiny +}}}
\newcommand{\odd}{\textnormal{{\tiny --}}}
\newcommand{\feven}{f^{\even}}
\newcommand{\fodd}{f^{\odd}}
\newcommand{\fteven}{\tilde{f}^{\even}}
\newcommand{\ftodd}{\tilde{f}^{\odd}}
\newcommand{\eteven}{\tilde{e}^{\even}}
\newcommand{\etodd}{\tilde{e}^{\odd}}
\newcommand{\ceven}{{c}^{\even}}
\newcommand{\codd}{{c}^{\odd}}
\newcommand{\reven}{{r}^{\even}}
\newcommand{\rodd}{{r}^{\odd}}
\newcommand{\meven}{m^{\even}}
\newcommand{\modd}{m^{\odd}}
\newcommand{\Keven}{K^{\even}}
\newcommand{\Kodd}{K^{\odd}}
\newcommand{\deven}{d^{\even}}
\newcommand{\dodd}{d^{\odd}}
\newcommand{\Minv}{M^{\dagger_{\epsilon}}}
\title{Computing with functions in spherical and polar geometries II. The disk}
\author{Heather Wilber\thanks{Center for Applied Mathematics, Cornell University, Ithaca, NY  14853. (hdw27@cornell.edu). This work is supported by a grant from the NASA Idaho Space Grant Consortium.} \and Alex Townsend\thanks{Department of Mathematics, Cornell University, Ithaca, NY  14853. (townsend@cornell.edu). This work is supported by National Science Foundation grant No.~1522577.} \and Grady B. Wright\thanks{Department of Mathematics, Boise State University, Boise, ID 83725-1555.  (gradywright@boisestate.edu). This work is supported by National Science Foundation grant DMS 1160379.}}
\begin{document}
\maketitle

\begin{abstract}
A collection of algorithms is described for numerically computing with smooth functions defined on the unit disk. Low rank approximations 
to functions in polar geometries are formed by synthesizing the disk analogue of the double Fourier sphere method with a structure-preserving 
variant of iterative Gaussian elimination that is shown to converge geometrically for certain analytic functions. This adaptive procedure is near-optimal in its sampling strategy, producing approximants that are stable for differentiation and facilitate the use of FFT-based algorithms in both variables. The low rank form of the approximants is especially useful for operations such as integration and differentiation, reducing them to essentially 1D procedures,  and it is also exploited to formulate a new fast disk Poisson solver that computes low rank approximations to solutions. This work complements a companion paper (Part I) on computing with functions on the surface of the unit sphere. 
\end{abstract}

\begin{keywords}
low rank approximation, Gaussian elimination, functions, approximation theory
\end{keywords}

\begin{AMS}
65D05 
\end{AMS}

\section{Introduction} 
Polar geometries play a central role in scientific computing, with applications in fluid dynamics~\cite{kerswell2005recent, serre2001three}, 
optics~\cite{mahajan2007orthonormal}, and astrophysics~\cite{pringle1981accretion, godon1997numerical}. Advances in these areas  require  effective representations for functions on the unit disk, and compressed representations of such functions have become increasingly important.  
We develop a novel variant of iterative Gaussian elimination (GE) that adaptively constructs low rank approximants with near-optimal compression properties; this enables fast and spectrally accurate computations with functions on the disk. 

Methods that represent functions on the disk with expansions in the Chebyshev--Fourier basis allow  for the use of fast transforms~\cite{eisen1991spectral,  Fornberg_95_01, shen2000new}, but may not maintain regularity at the origin of the disk when used with GE. 
Alternatively, representations employing expansions that incorporate regularity in the basis are not readily associated with fast transforms~\cite{vasil2016tensor}. 
Unsatisfied with having to choose between either regularity at the origin or fast transforms, we propose an approach that attempts to prioritize both. Combining low rank function approximation with an interpolation method that 
samples functions over the unit disk in a way that is analogous to the double Fourier sphere (DFS) method~\cite{Fornberg_95_01}, we construct 
approximants with several desirable properties: (1) A structure that permits the use of fast transforms based on the fast Fourier transform (FFT) 
in both variables, (2) regularity over the origin of the disk, and (3) a near-optimal underlying interpolation grid that does not oversample near 
the origin. 

Using this idea, we have created an integrated computational framework for working with functions in polar geometries. This includes the 
development of algorithms for integration, function evaluation, vector calculus, and a fast Poisson solver. Our software is 
publicly available through the open source Chebfun software system written in MATLAB~\cite{Chebfun}.
This development allows investigators to compute in polar geometries without concern for the underlying discretization or procedural details, providing 
an intuitive platform for data-driven computations, explorations and visualizations with functions on the unit disk. Various examples are 
available at www.chebfun.org/examples for the reader to explore.

Part I of this two-part series of papers developed a structure-preserving, iterative variant of Gaussian elimination (GE) for computing with 
functions on the surface of the unit sphere~\cite{townsend2015computing}. Here, we extend the ideas of~\cite{townsend2015computing} 
to functions defined on the unit disk.  We also include several new results that were not discussed in Part I. In Section~\ref{sec: convergence}, we prove that our structure-preserving GE procedure converges geometrically for functions that are analytic in a sufficiently large region in the complex plane.  Section~\ref{sec:Poisson} describes a new Poisson solver that constructs near-optimal low rank approximations to solutions, and is conceptually quite different from the Poisson solver described in~\cite{townsend2015computing}. Additional new results include  a weighted singular value decomposition algorithm (Section~\ref{sec:svd}), and an extended discussion on the near-optimality of the GE procedure (Section~\ref{sec: BMCsvd}). 

The paper is structured as follows: First, we review existing techniques for computing with functions on the disk (Section~\ref{sec:existing}), 
including a discussion of the disk analogue to the DFS method. A brief review of low rank function approximation in Section~\ref{sec:low rank approx} is 
followed by a detailed description of the structure-preserving GE procedure applied to functions on the disk.
A collection of fast algorithms for computing with the resulting low rank approximants is given in Section~\ref{sec:diskalgorithms}, 
and a fast disk Poisson solver for computing solutions in low rank form is described in Section~\ref{sec:Poisson}. 

\section{Existing techniques for computations on the disk} \label{sec:existing}
There is an extensive literature on numerical methods for computing with functions on the disk. An overview in the context 
of solving Poisson's equation is given in~\cite{boyd2011comparing}. We briefly review a selection of these strategies. 

\subsection{Radial basis functions}
As a mesh-free method, radial basis functions can be used for applications on many types of geometries~\cite{FFBook}. Specific studies of 
global approximations on the disk include~\cite{Heryudono2010,Karageorghis2007304}, where 
the interpolation points are arranged so that the computational cost of the method reduces 
from $\mathcal{O}(N^3)$ to $\mathcal{O}(N\log N)$ operations, where $N$ is the number of function samples taken. Ill-conditioning can cause a loss of 3-5 digits of accuracy in problems of moderate size, but in most applications, this is perfectly acceptable. However, this prevents the construction of approximants that are accurate to machine precision, which is what we require.  

\subsection{Conformal mapping} Using the inverse of the cosine leminiscate function, a function $f$ on the unit disk can be mapped conformally 
to the unit square~\cite{schwarz1869ueber, amore2008solving}. This mapping avoids introducing a potentially problematic singularity at the origin and allows $f$ 
to be expressed as a bivariate Chebyshev expansion so that FFT-based transforms are applicable. 
Unfortunately, the mapping introduces four new artificial singularities corresponding to the corners of the square. Interpolation points 
unnaturally cluster near these singularities, resulting in excessive oversampling that diminishes the computational efficiency gained 
from the use of the FFT.\; In contrast, our approach enables the use of FFT-based transforms, while employing low rank approximation to avoid 
overresolving functions near the origin. 
     
\subsection{Basis expansions} A function $f(x, y)$ defined in Cartesian coordinates on the unit disk 
can be converted to a function in polar coordinates, $f(\theta, \rho )$, through the transformation
\begin{equation}
\label{eq:transfvars}
x = \rho\cos\theta,\quad y = \rho\sin\theta, \qquad (\theta, \rho ) \in [-\pi, \pi] \times [0, 1].
\end{equation}
This change of variables relates a function on the disk to a function defined on a rectangular domain, 
where advantageous algorithms can often be employed. Noting that functions on the disk are periodic in the angular variable, $\theta$, a sufficiently smooth function $f$ can be approximated by a Fourier expansion: 

\begin{equation}
\label{eq: Fourierexpansion}
f(\theta, \rho) \approx \sum_{k=-n/2}^{n/2-1} \phi_k(\rho) e^{ik\theta}, \qquad (\theta,\rho)\in[-\pi,\pi]\times[0,1],
\end{equation}
where $n$ is an even integer. It is not obvious what expansion should be employed 
for representing the function $\phi_k(\rho)$. Three common choices are:

\begin{itemize}[leftmargin=*]

\item {\bf Bessel expansions:} A natural analogue of the trigonometric and spherical harmonic expansions, Bessel
expansions are derived from the eigenfunctions of the Laplace operator in polar coordinates~\cite{churchill1961fourier}. 
Here, assuming that $f(\theta,1) = 0$ for $\theta\in[-\pi,\pi]$, we write $\phi_k(\rho) = \sum_{\ell =0}^{m-1} a_{\ell k} J_k(\omega_{k \ell} \rho)$,  $\rho \in [0, 1]$, 
where $J_k(z)$ is the $k$th order Bessel function, and $\omega_{k \ell}$ is the $\ell$th positive root of $J_k(z)$~\cite[(10.23)]{NIST}. 
The expansion can also be modified to allow for functions that are nonzero at the boundary of the disk.
This choice guarantees the expansion is smooth at the origin, but to compute the expansion coefficients, one must approximate integrals involving Bessel functions. While fast algorithms for such computations exist, they are particularly effective only when the parameter $k$ is small~\cite{kapurl1995algorithm, townsend2015fast}.  More generalized algorithms typically involve significant precomputational costs~\cite{o2010algorithm}, and this limits their effectiveness in a regime where functions are resolved on adaptive grids. 

 \item {\bf One-sided Jacobi polynomial expansions:} Writing $\phi_k(\rho)$ as an expansion over the one-sided Jacobi 
 polynomials results in an expansion of $f(\theta, \rho)$ in the Zernike polynomial 
 basis~\cite{bhatia1954circle, von1934beugungstheorie}. This set of polynomials 
is considered theoretically analogous to the Legendre polynomials due to its orthogonality properties~\cite{bhatia1954circle}, 
and is often the basis of choice for approximation on the disk. 
More recently, a whole hierarchy of bases related to the one-sided Jacobi polynomials were employed to capture the 
regularity of vector- and tensor-valued functions on the disk~\cite{vasil2016tensor}. As before, this choice guarantees the 
expansion is smooth at the origin, but fast algorithms for computing the expansion coefficients are not efficient in our setting due to precomputational costs~\cite{o2010algorithm}. 

 \item {\bf Chebyshev expansions:} 
 Expanding $\phi_k(\rho)$ in the Chebyshev basis results in a truncated Chebyshev--Fourier expansion of $f$, i.e., 
\begin{equation}
\label{eq:FourierChebyshev}
f(\theta,\rho) \approx \sum_{k=-n/2}^{n/2-1} \sum_{\ell=0}^{m-1} a_{\ell k}T_\ell(2\rho-1)e^{ik\theta}, \qquad (\theta, \rho) \in [-\pi, \pi] \times [0, 1],
\end{equation}
where $T_\ell$ is the degree $\ell$ Chebyshev polynomial defined on $[-1,1]$. Given samples of $f$ on an $m \times n$ Chebyshev--Fourier tensor product grid over $[-\pi, \pi] \times [0, 1]$, 
the coefficients in~\eqref{eq:FourierChebyshev} 
can be computed in $\mathcal{O}(mn\log(mn))$ operations via the FFT.\; Unfortunately, this grid is artificially clustered near 
$\rho=0$~\cite{Fornberg_95_01}, and this choice of basis does not naturally impose any regularity at $\rho=0$. Our 
approach alleviates both of these drawbacks by combining the disk analogue to the DFS (see Section~\ref{sec:DFSdisk}) with a structure-preserving 
low rank construction procedure (see Section~\ref{sec:low rank approx}).

\end{itemize}

\subsection{The disk analogue of the double Fourier sphere method}\label{sec:DFSdisk}
 The disk analogue of the DFS method proceeds by constructing a Chebyshev--Fourier expansion of a function defined on $[-\pi, \pi] \times [-1, 1]$, 
 instead of $[-\pi, \pi] \times [0, 1]$. This strategy ``doubles" $f$ over the disk in the sense that $f$ is sampled 
 twice, but  $\rho=0$ is no longer treated as a boundary. Mathematically, this doubled extension of $f$, which we will call $\tilde{f}$, can be expressed by defining $g(\theta, \rho)$ and $h(\theta, \rho)$ on $[0, \pi] \times [0, 1]$, so 
that $g(\theta, \rho) = f(\theta-\pi, \rho)$ and $h(\theta, \rho) = f(\theta, \rho)$.  Then, 
 \begin{equation}
 \tilde{f}(\theta, \rho) =
\begin{cases}
g(\theta + \pi, \rho), & (\theta, \rho) \in [-\pi, 0] \times [0, 1],\\
h(\theta, \rho), & (\theta, \rho) \in [0, \pi] \times [0, 1],\\
g(\theta, -\rho), & (\theta, \rho) \in [0, \pi] \times [-1, 0],\\
h(\theta + \pi, -\rho), & (\theta, \rho) \in [-\pi, 0] \times [-1, 0]. 
\end{cases}
\label{eq:BMCsym disk}
\end{equation}
This idea is conceptually analogous to the DFS method~\cite{Merilees_73_01}, which is used for approximating functions on the surface of the unit sphere~\cite{townsend2015computing}.  

A useful connection between the DFS method and its disk analogue is the presence of similar structure in the extended functions. 
We observe in~\eqref{eq:BMCsym disk} that $ \tilde{f}$ possesses \textit{block-mirror centrosymmetric} (BMC) structure~\cite{townsend2015computing}, and refer to functions that satisfy~\eqref{eq:BMCsym disk} as BMC functions. 

The BMC structure of $ \tilde{f}$ 
can be intuitively described as
\begin{equation}
 \tilde{f} = \begin{bmatrix} g & h \\[3pt] {\tt flip}(h) & {\tt flip}(g) \end{bmatrix},  
\label{eq:depictBMC1}
\end{equation} 
where ${\tt flip}$ refers to the MATLAB command that reverses the 
order of the rows of a matrix. This is also called a glide reflection in group theory~\cite[\S 8.1]{martin2012transformation}.
 
\begin{figure} 
 \centering
 \begin{minipage}{.49\textwidth} 
 \centering
  \begin{overpic}[width=.55\textwidth]{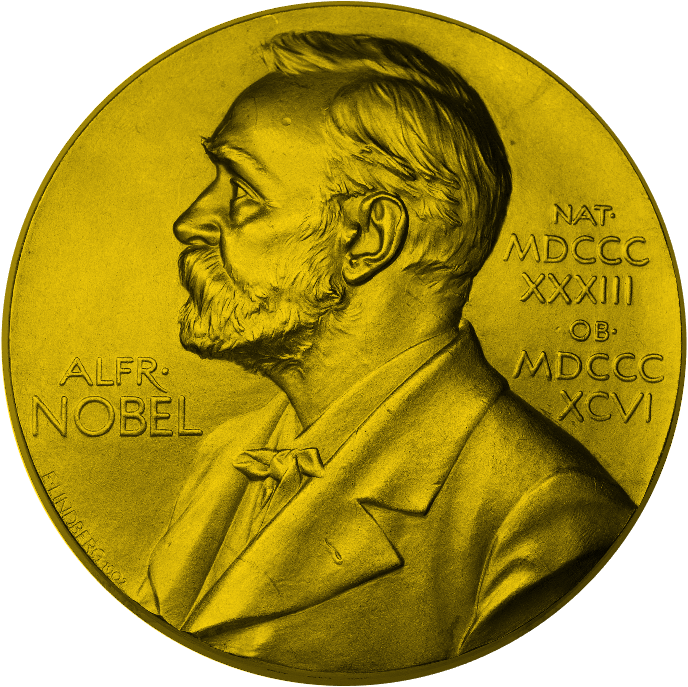}
  \end{overpic}
  
  \begin{overpic}[width=\textwidth]{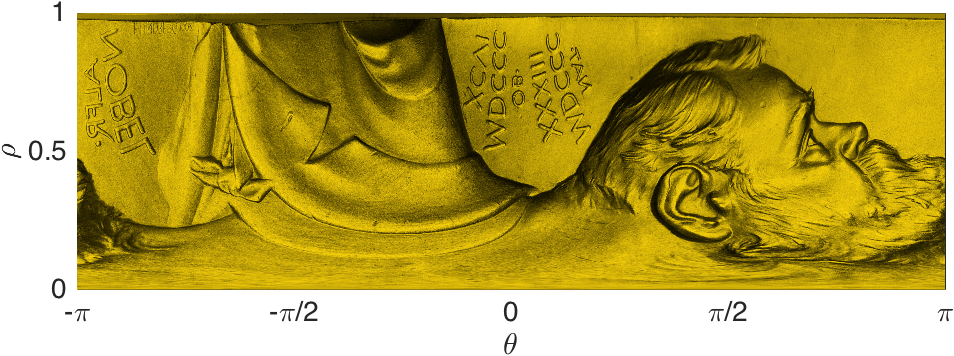}
  \put(15,95) {(a)}
  \put(5,40) {(b)}
  \end{overpic}
 \end{minipage}
\begin{minipage}{.49\textwidth} 
  \begin{overpic}[width=\textwidth]{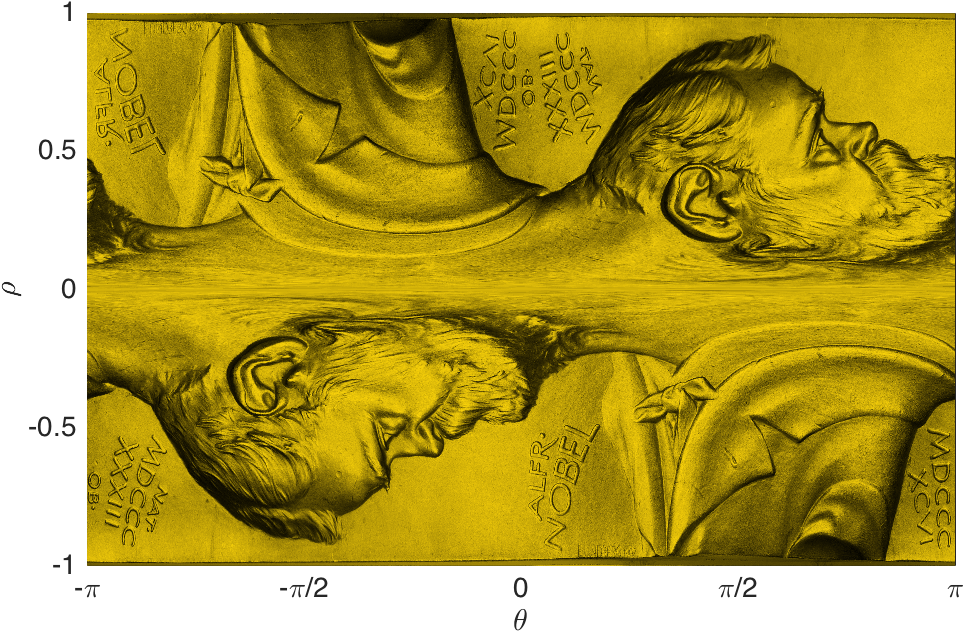}
 \put(15,70) {(c)}
 \end{overpic}
 \end{minipage}
 \caption{The disk analogue of the DFS method applied to the Nobel prize medal. (a) The medal. (b) The projection of the medal using polar coordinates. (c) The medal after applying the disk analogue to the DFS method. This is a BMC-II ``function'' that is periodic in $\theta$ and defined over $\rho\in[-1,1]$.}
 \label{fig:Coin}
\end{figure}

In addition to having BMC structure and being periodic in $\theta$, $ \tilde{f}$ must be constant along the line representing the origin of the 
disk, $\rho =0$. This feature of $ \tilde{f}$ is not shared by all BMC functions. For example, the BMC function 
$\tilde{f}(\theta, \rho) = \sin2\theta \cos2\rho$ is not constant along $\tilde{f}(\theta, 0)$ for $\theta \in [-\pi, \pi]$, and therefore does not correspond 
to a continuous function on the disk. To capture this important aspect of BMC functions associated with the disk, we define the following variant: 
\begin{definition}(BMC-II function)
A function $\tilde{f}:[-\pi,\pi]\times[-1,1]\rightarrow \mathbb{C}$ is a Type-II BMC (BMC-II) function if it is a BMC function and 
$f(\cdot,0) = \alpha$, where $\alpha$ is a constant.  
\label{def:BMCfunctionII}
\end{definition}

An analogous variant for computing on the sphere, the BMC-I function, 
is defined to be constant along two lines corresponding to the north and south poles of the sphere~\cite{townsend2015computing}.

Figure~\ref{fig:Coin} displays the analogue of the DFS method applied to the Nobel Prize medal and illustrates BMC-II structure. Since every function $f$ on the 
disk corresponds to a BMC-II function $\tilde{f}$ that is $2\pi$-periodic in $\theta$, we apply our approximation technique and all subsequent algorithms on 
$\tilde{f}$, with rigid adherence to preserving the BMC-II structure at every step. Calculations performed on $\tilde{f}$ always 
correspond to a computation on the original function, $f$, and consistently remain associated with the geometry of the disk. For example, smooth functions with BMC-II structure are always continuously differentiable over $\rho = 0$. In Section~\ref{sec:diskalgorithms}, we discuss the differentiation of BMC-II functions in more detail. 

The strategy of doubling up interpolation grids on the disk to reduce the redundancy of sampling near $\rho=0$ in spectral collocation 
methods is well established~\cite{Fornberg_95_01,trefethen2000spectral}, and several variants have been 
proposed~\cite{heinrichs2004spectral,eisen1991spectral,shen2000new}.
These doubling strategies alleviate some, but not all, of the issues associated with oversampling near the origin. Our approach is different 
in that it combines a doubling strategy with a low rank approximation procedure. Low rank methods provide compressed representations of 
functions and can therefore further alleviate issues related to the  overresolution of functions near the origin of the disk (see Figure~\ref{fig:diskgrid}). 
 
 \subsection{Software} 
 \label{sec:software} 
 Our software for computing with functions on the unit disk is called Diskfun.\footnote{After our software was developed and posted on GitHub, another software system named ``diskfun" was 
 released in the Approxfun software system written in Julia. It is not related to this work.}\;It is implemented within MATLAB as a part of Chebfun~\cite{Chebfun}, and is accessed through the creation 
 of objects called diskfuns. Below, we display the MATLAB code used to 
 represent the function 
\[f(\theta, \rho) = \cos\big(3\pi \rho \big) + \sin\big(2\rho\sin\theta-.4\big)\]  as a diskfun object: 
 \begin{verbatim}
 	 f = diskfun(@(t,r) cos(3*pi*r)+sin(2*r.*sin(t)-.4),'polar')
 	 f =
     diskfun object:
       domain        rank    vertical scale
      unit disk       13           2
\end{verbatim}
The printout provides the numerical \textit{rank} of the function, discussed in Section~\ref{sec:low rank approx}, and it also displays the vertical 
scale, an approximation of the absolute maximum value of $f$. 
 
The default setting of Diskfun assumes that functions are supplied in Cartesian coordinates. However, diskfun objects can be constructed from function 
handles in polar
coordinates by adding the flag \texttt{`polar'} to the construction command, as shown above. Once a diskfun is created, users have access to a large number of 
algorithms tailored to functions defined on the disk via overloaded MATLAB commands (see Section~\ref{sec:diskalgorithms}). 
For example, integration of $f$ is performed by the \texttt{sum} command, and differentiation is performed by \texttt{diff}. 
 
 \section{Low rank approximation for functions on the disk} \label{sec:low rank approx}
In~\cite{Chebfun2}, a low rank approximation method for computing with 2D functions on bounded rectangular domains is described. The authors
construct compressed representations of bivariate functions that facilitate the use of essentially 1D algorithms in subsequent computations. This 
makes it especially useful in relation to Chebfun, where efficient 1D procedures are well established and highly optimized. 
Here, we develop an analogous technique for the polar setting. 

A nonzero function $ \tilde{f}(\theta,\rho)$ is a rank 1 function if it can be written as a product of two univariate functions, i.e., 
$\tilde{f}(\theta,\rho) = c(\rho)r(\theta)$. A function $\tilde{f}$ is of rank at most $K$ if it can be written as a sum of $K$ rank 1 functions. 
While most functions are mathematically of infinite rank, smooth functions can often be approximated to machine precision with a rank $K$ 
truncation, i.e.,
\begin{equation}
\label{eq:genlowrankapprox}
\tilde{f}(\theta, \rho) \approx \sum_{j=1}^{K} c_j(\rho)r_j(\theta),
\end{equation}
for some relatively small $K$~\cite{Chebfun2}. Below, we develop an efficient procedure for constructing rank $K$ approximants of 
BMC-II functions that preserves BMC-II structure.

\subsection{Iterative Gaussian elimination on functions}

Given a matrix $A$ of rank $n$, $K<n$ steps of Gaussian elimination (GE) with complete or rook pivoting can often be used to construct a near-best rank~$K$ approximation to $A$, provided that the singular values of $A$ decay to zero sufficiently fast~\cite{foster2006comparison}. Methods related to GE, such as pseudoskeleton approximation~\cite{goreinov1997theory}, adaptive cross approximation~\cite{bebendorf2000approximation}, two-sided interpolative 
decomposition~\cite{halko2011finding}, and Geddes--Newton approximation~\cite{carvajal2005hybrid} can be used to find low rank approximations to multivariate functions. In~\cite{Chebfun2}, such approximations are constructed using an adaptive, iterative variant of GE with complete pivoting, and we will extend this idea to the approximation of functions in polar geometries.

Given the function $\tilde{f}$, denote the maximum absolute value of $\tilde{f}$ for $(\theta, \rho) \in [-\pi, \pi] \times [-1, 1]$ by 
$\tilde{f}(\theta^*, \rho^*)$. This value serves as a pivot. A GE step with complete pivoting proceeds by forming a rank 1 function from 
this pivot and subtracting it from $\tilde{f}$: 
\begin{equation} 
\tilde{f}(\theta,\rho) \quad \longleftarrow\quad \tilde{f}(\theta,\rho) - \underbrace{\frac{\tilde{f}(\theta^*,\rho)\tilde{f}(\theta,\rho^*)}{\tilde{f}(\theta^*,\rho^*)}}_{\text{A rank~$1$ approx.~to $\tilde{f}$}}.
\label{eq:GEstep}
\end{equation}      
In this scheme, functions of the form $\tilde{f}(\theta^*,\rho)$ are referred to as ``column slices" of $\tilde{f}$. Similarly, functions of 
the form $\tilde{f}(\theta,\rho^*)$ are ``row slices". The step in~\eqref{eq:GEstep} zeros out the row and column slices containing the pivot. 
Since $\tilde{f}$ may be of infinite rank, the GE procedure is terminated after the absolute maximum of the residual falls below some specified 
relative tolerance, such as the product of machine epsilon and the (approximate) maximum value of the function. The number of steps required to achieve this is an upper bound on the \textit{numerical rank} of $\tilde{f}$, 
which is the minimum rank required to approximate $\tilde{f}$ to machine precision using any bounded function of finite rank~\cite{townsend2014computing}.

Applying the GE procedure to $ \tilde{f}$ for $K$ steps, a rank $K$ approximation is constructed:
\begin{equation}
 \tilde{f} (\theta, \rho) \approx \sum_{j=1}^{K}d_j c_j(\rho) r_j(\theta).
\label{eq: low_rank_rep} 
\end{equation}
Here, $d_j$ is a coefficient related to the GE pivots, and $c_j(\rho)$ and $r_j(\theta)$ are the $j$th column slice and row slice, 
respectively, constructed during the GE procedure. 

Unfortunately, this GE procedure does not preserve BMC-II symmetry and therefore destroys the association between $\tilde{f}$ and a continuous 
function on the disk. In~\cite{townsend2015computing}, a variation of GE that preserves symmetry is described for BMC functions 
related to the sphere. Crucially, this method only depends on the BMC structure of the function, and not on any additional features related 
to spherical geometries per se. With some modifications, as we now describe, this procedure also applies to BMC-II functions associated with the disk. 

\subsection{Structure-preserving Gaussian elimination} \label{sec:structGE}
The structure-preserving GE algorithm presented in~\cite{townsend2015computing} performs a GE step similar to~\eqref{eq:GEstep}, but with 
the scalar pivot replaced with the following $2\times2$ pivot matrix:
\begin{figure} 
\centering
\includegraphics[width=.36\textwidth,trim=220 400 220 210, clip]{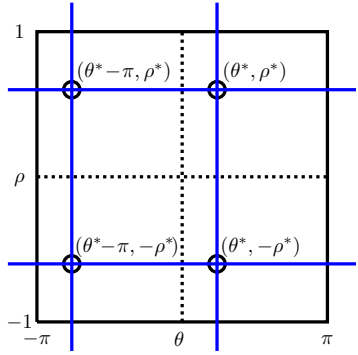}
\caption{A $2\times 2$ pivot (black circles) and corresponding column and row slices (blue lines) used in a GE step on $\tilde{f}$ to 
preserve the BMC structure of a function.}
\label{fig:pivotLocations}
\end{figure}
\begin{equation}
\label{eq:pivotblockcoords}
M = \begin{bmatrix}
 \tilde{f}(\theta^*-\pi, \rho^*) &  \tilde{f}(\theta^*, \rho^*) \\  \tilde{f}(\theta^*-\pi, -\rho^*) &  \tilde{f}(\theta^*, -\rho^*)
\end{bmatrix},
\end{equation}
where $(\theta^*, \rho^*) \in [0, \pi] \times [0, 1]$ are fixed values selected by the pivoting strategy described in 
Figure~\ref{fig:GaussianElimination}. To understand why this is an appropriate choice, note that BMC symmetry is entirely characterized 
by the following two equalities: $\tilde{f}(\theta^*-\pi, \rho)= \tilde{f}(\theta^*, -\rho)$, $\rho \in [-1, 1]$, and 
$\tilde{f}(\theta, \rho^*) = \tilde{f}(\theta-\pi, -\rho^*)$, $\theta \in [-\pi, \pi]$. Figure~\ref{fig:pivotLocations} shows that the location 
of the entries of $M$ correspond to the intersections of these row and column slices. Letting $\tilde{f}(\theta^*-\pi, \rho^*)=a$ and 
$\tilde{f}(\theta^*, \rho^*) = b$, (\ref{eq:pivotblockcoords}) can be written as the centrosymmetric matrix 
\begin{equation}
\label{eq:pivotblockvals}
M = \begin{bmatrix}
a & b \\ b & a
\end{bmatrix}.
\end{equation} 
Assuming $M^{-1}$ exists, a GE step with the pivot matrix $M$ is given by 
\begin{equation}
  \tilde{f}(\theta, \rho) \quad \longleftarrow\quad  \tilde{f}(\theta, \rho) - \underbrace{\begin{bmatrix} \tilde{f}(\theta^*-\pi,\rho) &  \tilde{f}(\theta^*,\rho)\\[3pt] \end{bmatrix} M^{-1} \begin{bmatrix} \tilde{f}(\theta,\rho^*)\\[3pt]  \tilde{f}(\theta,-\rho^*)\end{bmatrix}}_{\displaystyle =\tilde{s}(\theta, \rho)}.
\label{eq:GEstructurePreservingStep}
\end{equation}
We now show that the GE step in~\eqref{eq:GEstructurePreservingStep} preserves BMC symmetry of $\tilde{f}$.

\begin{lemma} \label{lem:GEstepsym} 
Given a BMC function $\tilde{f}$, the update $\tilde{s}$ in~\eqref{eq:GEstructurePreservingStep} is also a BMC 
function. That is, the GE step in~\eqref{eq:GEstructurePreservingStep} preserves BMC-symmetry. 
\end{lemma}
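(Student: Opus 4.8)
The plan is to collapse everything onto a single symmetry relation and then exploit the centrosymmetric structure of the pivot $M$. First I would observe that, reading off the block definition~\eqref{eq:BMCsym disk}, BMC symmetry is equivalent to the single identity $\tilde{f}(\theta-\pi,\rho)=\tilde{f}(\theta,-\rho)$ holding for all $(\theta,\rho)$ (invoking $2\pi$-periodicity in $\theta$); the two equalities quoted just before the lemma are simply this relation specialized to the fixed coordinates $\theta^*$ and $\rho^*$. Since the BMC condition is linear and $\tilde{f}$ is assumed BMC, it suffices to prove that the subtracted correction $\tilde{s}$ obeys the same relation, i.e. $\tilde{s}(\theta-\pi,\rho)=\tilde{s}(\theta,-\rho)$; the updated residual $\tilde{f}-\tilde{s}$ then inherits the symmetry automatically.

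To organize the computation I would rewrite the GE step~\eqref{eq:GEstructurePreservingStep} as $\tilde{s}(\theta,\rho)=\mathbf{u}(\rho)^{\!\top}M^{-1}\mathbf{v}(\theta)$, where $\mathbf{u}(\rho)^{\!\top}=\begin{bmatrix}\tilde{f}(\theta^*-\pi,\rho)&\tilde{f}(\theta^*,\rho)\end{bmatrix}$ collects the two column slices and $\mathbf{v}(\theta)=\begin{bmatrix}\tilde{f}(\theta,\rho^*)&\tilde{f}(\theta,-\rho^*)\end{bmatrix}^{\!\top}$ collects the two row slices. The crux is to track how each vector transforms. The substitution $\theta\mapsto\theta-\pi$ touches only $\mathbf{v}$, while $\rho\mapsto-\rho$ touches only $\mathbf{u}$, so applying the BMC relation entrywise I expect to find $\mathbf{v}(\theta-\pi)=P\,\mathbf{v}(\theta)$ and $\mathbf{u}(-\rho)^{\!\top}=\mathbf{u}(\rho)^{\!\top}P$, where $P=\left[\begin{smallmatrix}0&1\\1&0\end{smallmatrix}\right]$; in words, each substitution merely interchanges the two slice entries.

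These two transformation laws reduce the claim to the single matrix identity $M^{-1}P=PM^{-1}$, because then
\[
\tilde{s}(\theta-\pi,\rho)=\mathbf{u}(\rho)^{\!\top}M^{-1}P\,\mathbf{v}(\theta)
\quad\text{and}\quad
\tilde{s}(\theta,-\rho)=\mathbf{u}(\rho)^{\!\top}PM^{-1}\,\mathbf{v}(\theta)
\]
must coincide for every $\theta$ and $\rho$. I would close by noting that the centrosymmetric pivot~\eqref{eq:pivotblockvals}, $M=\left[\begin{smallmatrix}a&b\\b&a\end{smallmatrix}\right]$, commutes with $P$ (both products equal $\left[\begin{smallmatrix}b&a\\a&b\end{smallmatrix}\right]$), and that commuting with $P$ passes to $M^{-1}$ whenever $M$ is invertible.

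The one genuinely conceptual point—and hence the step I would flag as the main obstacle—is recognizing that the two substitutions act as a flip on the row slices and a flip on the column slices \emph{separately}, and that the centrosymmetry of the $2\times2$ pivot is exactly the algebraic condition making $M^{-1}$ intertwine these two flips so that they cancel. A scalar pivot, or any noncentrosymmetric $2\times2$ pivot, would break this cancellation; everything else is routine bookkeeping with the defining symmetry.
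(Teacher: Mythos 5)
Your proof is correct and rests on exactly the same mechanism as the paper's: the paper writes $\tilde{s}$ in~\eqref{eq:GEstructurePreservingStep} as a block quasimatrix product sandwiching $M^{-1}$ between the column slices and the row slices, and then uses precisely the fact you isolate---that the centrosymmetric $M^{-1}$ commutes with the $2\times 2$ exchange matrix (your $P$, the paper's $J$)---to expand that product into the four-block BMC pattern of~\eqref{eq:depictBMC1}. The only real difference is bookkeeping: you verify the equivalent glide-reflection identity $\tilde{s}(\theta-\pi,\rho)=\tilde{s}(\theta,-\rho)$ by tracking how the two substitutions permute the slice vectors, whereas the paper reconstitutes the blocks $g$, $h$, $\texttt{flip}(h)$, $\texttt{flip}(g)$ explicitly in quasimatrix form.
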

\begin{proof} 
To show that $\tilde{s}(\theta, \rho)$ has BMC structure, we employ {\em quasimatrices}.\footnote{A quasimatrix $A$ of size $[a, b] \times n$ is a matrix with $n$ columns, where each 
column is a function defined on the interval $[a, b]$~\cite{Townsend_15_01}.} 

Let $J$ denote the $2 \times 2$ exchange matrix, so that for a matrix $A \in \mathbb{C}^{2 \times n}$, $JA$ reverses the rows of $A$. 
Let $\mathcal{J}$ be the reflection operator, $\mathcal{J}: \tilde{s}(\theta, \rho) \rightarrow \tilde{s}(\theta, -\rho)$. 
Now we use blocks of quasimatrices to rewrite $\tilde{s}$. Writing $\tilde{f}$ in terms of the functions $g$ and $h$ given 
in~\eqref{eq:BMCsym disk}, we have 
$M= \begin{bmatrix}
  g(\theta^{*}, \rho^{*}) &  h(\theta^{*}, \rho^{*})     \\ 
 h(\theta^{*}, \rho^{*}) &  g(\theta^{*}, \rho^{*})    
\end{bmatrix}$.
Let $Q$ be the $[0, \pi] \times 2$ quasimatrix defined as $Q = \begin{bmatrix} g(\theta^*, \rho) & | & h(\theta^*, \rho)\end{bmatrix}$, 
and let $P$ be the $[0, 1] \times 2$ quasimatrix defined as $P = \begin{bmatrix} \text g(\theta, \rho^{*}) & |& h(\theta, \rho^{*})\end{bmatrix}$. 
Then, $\tilde{s}$ in~\eqref{eq:GEstructurePreservingStep} can be written as 
\begin{equation}
\label{eq:exchblocks}
\tilde{s} = 
\begin{bmatrix}
Q \\
\mathcal{J} (QJ) \\	
\end{bmatrix}
M^{-1}
\begin{bmatrix}
P^T & JP^T  
\end{bmatrix}. 
\end{equation}
Since $M^{-1}$ is centrosymmetric, it commutes with $J$. Using this fact,~\eqref{eq:exchblocks} becomes
\begin{equation}
\tilde{s} = 
 \begin{bmatrix} 
 QM^{-1}P^T & QM^{-1}JP^T \\
 \mathcal{J} (QM^{-1}JP^T ) & \mathcal{J} ( QM^{-1}P^T )
 \end{bmatrix}
 \label{eq:blockBMC},
\end{equation}
which, by the definition of $\mathcal{J}$, is a BMC function. 
\end{proof}

Lemma~\ref{lem:GEstepsym} demonstrates that~\eqref{eq:GEstructurePreservingStep} provides a structure-preserving GE procedure for BMC 
functions that can be used to construct a low rank approximation to $\tilde{f}$ as in~\eqref{eq: low_rank_rep}.\footnote{The function $\tilde{s}$ in~\eqref{eq:GEstructurePreservingStep} is rank $2$ and can be split into two rank $1$ BMC functions (see Section~\ref{sec:parity}).}  However, this relies on the 
fact that $M$ is invertible, which may not always be the case. For example, $M$ is singular for any BMC function that is $\pi$--periodic 
in $\theta$. For this reason, we must replace $M^{-1}$ in~\eqref{eq:GEstructurePreservingStep} with $\Minv$, the $\epsilon$-pseudoinverse 
of $M$~\cite[Sec.~5.5.2]{Golub_2012_01}. The matrix $\Minv$ is associated with the singular values of $M$ and a parameter $\epsilon>0$. 
We will discuss the choice of $\epsilon$ in Section~\ref{sec:parity}, and an explicit formula for $\Minv$ is given in~\cite{townsend2015computing}. 
Using $\Minv$, the amended GE step is expressed by
\begin{equation}
 \tilde{f}(\theta,\rho) \quad \longleftarrow\quad \tilde{f}(\theta,\rho) - \begin{bmatrix}\tilde{f}(\theta^*-\pi,\rho) & \tilde{f}(\theta^*,\rho)\\[3pt] \end{bmatrix} \Minv \begin{bmatrix}\tilde{f}(\theta,\rho^*)\\[3pt] \tilde{f}(\theta,-\rho^*)\end{bmatrix}.
\label{eq:GEstructurePreservingStepPseudoinverse}
\end{equation} 
Lemma~\ref{lem:GEstepsym} also holds for~\eqref{eq:GEstructurePreservingStepPseudoinverse} because, like $M^{-1}$, $\Minv$ is centrosymmetric. 

The strategy used to select each pivot matrix is important, as it relates to the efficiency and convergence of the GE procedure. The $2\times2$ 
analogue of complete pivoting proceeds by choosing $(\theta^*, \rho^*) \in [0, \pi] \times [0, 1]$ such that $\sigma_1(M)$ is maximized over all 
$M$, where $\sigma_1(M)$ is the larger of the two singular values of $M$. Given the simple form of $M$ in~\eqref{eq:pivotblockvals}, it is easy 
to see that $ \sigma_1(M) =\max\{|a+b|, |a-b|\}$. In practice, it is much more efficient to choose $(\theta^*, \rho^*)$ 
from a coarse, discrete grid on $[-\pi,\pi ] \times [0, 1]$. This results in a large, but not necessarily maximal, value of $\sigma_1(M)$. 
Fortunately, GE is robust to these kinds of compromises, as a detailed analysis in~\cite{townsend2016gaussian} shows. 

\begin{figure} 
 \centering 
 \fbox{
 \parbox{.97\textwidth}{ 
 \textbf{Algorithm: Structure-preserving GE on BMC functions}
 
 \vspace{.2cm}
 
 \textbf{Input:} A BMC function $\tilde{f}$ and a coupling parameter $0\leq \alpha\leq 1$.\\[-7pt] 
 
 \textbf{Output:} A structure-preserving low rank approximation $\tilde{f}_k$ to $\tilde{f}$.\\[-3pt]
 
 Set $\tilde{f}_0=0$ and $\tilde{e}_0 = \tilde{f}$.\\[-7pt]
  
 \textbf{for} $k=1,2,3,\ldots,$\\[-10pt]
 
 $\quad$ Find $(\theta_{k},\rho_{k})$ such that $M =  \begin{bmatrix}a & b \cr b & a\end{bmatrix}$, where $a = \tilde{e}_{k-1}(\theta_{k-1}-\pi,\rho_{k-1})$ and 
 
 $\quad$ $b = \tilde{e}_{k-1}(\theta_{k-1},\rho_{k-1})$ has maximal $\sigma_1(M)$.\\[-7pt]
  
 $\quad$ Set $\epsilon = \alpha\sigma_1(M)$. \\[-10pt]
   
 $\quad$ $\tilde{e}_{k} = \tilde{e}_{k-1} - \begin{bmatrix}\tilde{e}_{k-1}(\theta_k-\pi,\rho) & \tilde{e}_{k-1}(\theta_k,\rho)\\[3pt] \end{bmatrix} \Minv \begin{bmatrix}\tilde{e}_{k-1}(\theta,\rho_k)\\[3pt] \tilde{e}_{k-1}(\theta,-\rho_k)\end{bmatrix}$.\\
 
 $\quad$ $\tilde{f}_{k} = \tilde{f}_{k-1} - \begin{bmatrix}\tilde{e}_{k-1}(\theta_k-\pi,\rho) & \tilde{e}_{k-1}(\theta_k,\rho)\\[3pt] \end{bmatrix} \Minv \begin{bmatrix}\tilde{e}_{k-1}(\theta,\rho_k)\\[3pt] \tilde{e}_{k-1}(\theta,-\rho_k)\end{bmatrix}$.\\[-7pt]
  
 \textbf{end}
 }}
 \caption{A continuous idealization of our structure-preserving GE procedure on BMC 
 functions. In practice we use a discretization of this procedure and terminate it after a finite 
 number of steps.}
\label{fig:GaussianElimination}
\end{figure}

The above GE procedure preserves \textit{general} BMC structure, but it does not preserve BMC-II structure: Nothing 
in~\eqref{eq:GEstructurePreservingStepPseudoinverse} enforces that each constructed rank $1$ function in~\eqref{eq: low_rank_rep} 
is constant along the line $\tilde{f}(\theta, 0)$. However, in the case where $\tilde{f}(\theta, 0)=0$, each term 
in~\eqref{eq: low_rank_rep} constructed through~\eqref{eq:GEstructurePreservingStepPseudoinverse} will possess BMC-II structure. This suggests 
a strategy for the case where $\tilde{f}(\theta, 0) \not= 0$. Since $\tilde{f}(\theta, 0)$ is constant by Definition~\ref{def:BMCfunctionII},
we deliberately choose the first GE step to zero out $\tilde{f}(\theta, 0)$ by subtracting off a rank 1 term that is constant in the $\theta$ direction: 
\begin{equation}
\tilde{f}(\theta, \rho) \quad \longleftarrow \quad \tilde{f}(\theta, \rho) - \tilde{f}(\theta^*, \rho). 
\label{eq: GE zero pole step}
\end{equation}
Since the update to $\tilde{f}$ is zero along $\tilde{f}(\theta, 0)$ after this modification, each additional rank~1 term constructed through 
continued applications of~\eqref{eq:GEstructurePreservingStepPseudoinverse} possesses BMC-II structure. 

A continuous idealization of the BMC-preserving GE process is shown in Figure~\ref{fig:GaussianElimination}. In practice, the algorithm implemented 
in Diskfun proceeds in two phases; this process is identical to the method described in~\cite{Chebfun2}, except with $2\times2$ pivots. The result is 
a low rank approximation to $\tilde{f}$ of the form~\eqref{eq: low_rank_rep}. We represent each of the $r_j(\theta)$ and $c_j(\rho)$ 
functions in~\eqref{eq: low_rank_rep} using Fourier and Chebyshev interpolants, respectively.
This process is achieved in \smash{$\mathcal{O}(K^3+K^2(m+n))$} operations~\cite{Chebfun2}, where $K$ is the numerical rank of the function, 
and $m$ and $n$ are the maximum number of Chebyshev and Fourier coefficients required to resolve the functions $c_j(\rho)$ and $r_j(\theta)$, respectively, 
to machine precision. 
\begin{figure} 
\begin{center}
 \begin{minipage}{.4\textwidth}
  \includegraphics[width=0.8\textwidth]{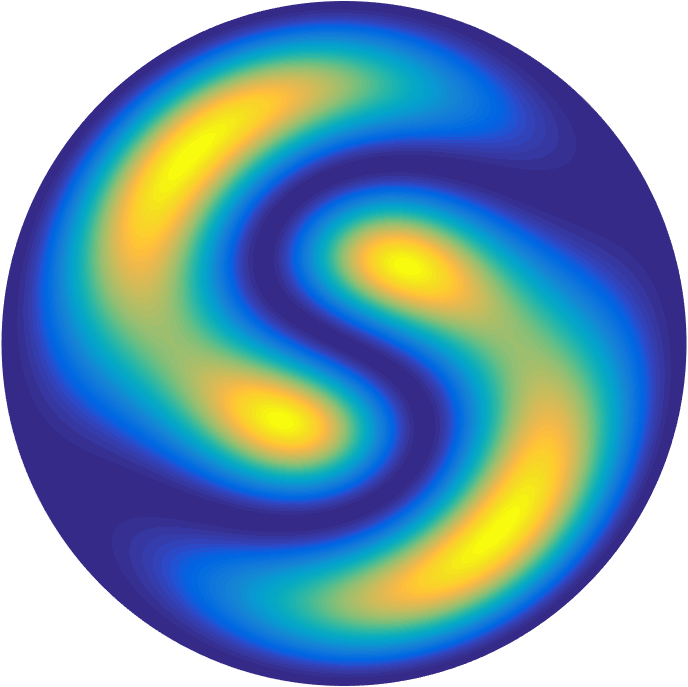}
 \end{minipage}
 \begin{minipage}{.4\textwidth}
  \includegraphics[width=0.8\textwidth]{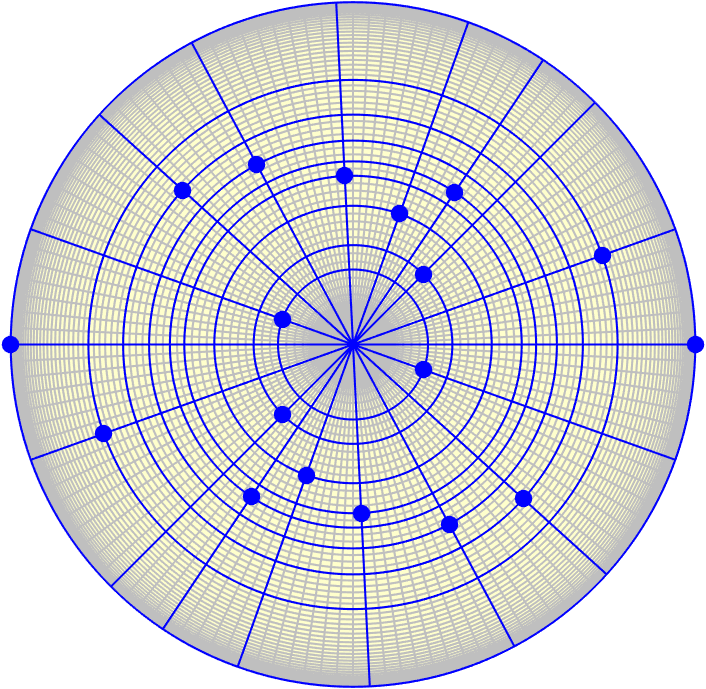}
 \end{minipage}
 \end{center}
 \caption{Left: The function $f(\theta,\rho)=-\cos((\sin(\pi \rho) \cos(\theta) + \sin(2\pi\rho)\sin(\theta))/4)$ on the unit disk, constructed 
with the \texttt{diskfun} command \texttt{f = diskfun(@(t,r) -cos((sin(pi*r).*cos(t)+sin(2*pi*r).*sin(t))/4),'polar')}
and plotted with the command \texttt{plot(f)}. Right: The skeleton used to approximate $f$, plotted with the command \texttt{plot(f,'.-')}.
The blue dots are the pivot locations taken by GE.~~The GE procedure samples $f$ at $m$ Chebyshev points along each blue line, and $n$ equispaced points along each blue circle,  where $m$ and $n$ correspond to number of Chebyshev coefficients and Fourier modes, respectively, in~\eqref{eq: low_rank_rep}.  The underlying tensor product grid (in gray) shows the sample points required to approximate $f$ to machine precision without the GE procedure applied to the DFS method. The overresolution of the tensor grid over the low rank skeleton can be seen.}
\label{fig:diskgrid} 
\end{figure}

The example in Figure~\ref{fig:diskgrid} illustrates the form of the final approximant.  
Each $c_j(\rho)$ defines a radial ``slice" of the function, 
and each $r_j(\theta)$ defines a circular ``slice". 
To form these slices, the GE algorithm adaptively samples $\tilde{f}$ along a sparse collection of lines referred to as the \textit{skeleton}, and constructs a rank $K$ approximant of the form of~\eqref{eq: low_rank_rep}.  In this process, only $K^2 + K(m+n)$ samples are required to approximate $\tilde{f}$ to machine precision, as opposed to the $mn$ samples required for the tensor product. 
As depicted in Figure~\ref{fig:diskgrid}, the use of low rank methods effectively counters the overresolution issues associated with applying Chebyshev--Fourier tensor product grids on the disk.

\subsection{A parity-based interpretation of structure-preserving GE} \label{sec:parity}
For an approximation to a function $f$ on the disk to be continuous and differentiable 
at $\rho=0$, the following properties must hold for the Fourier expansion of $f$ given in~\eqref{eq: Fourierexpansion}: 
\begin{itemize}
\item[(i)] $k$ is even $\implies \phi_k(\rho)$ is an even function,
\item[(ii)] $k$ is odd $\implies \phi_k(\rho)$ is an odd function,
\item[(iii)] $k \not=0$ $\implies \phi_k(0)=0$. 
\end{itemize}
In this section we show that these parity properties can be naturally recovered for the BMC-II function $\tilde{f}$, and are preserved by 
the GE procedure depicted in Figure~\ref{fig:GaussianElimination}. 

Let $\tilde{f}$ be a BMC function defined via functions $g$ and $h$ in~\eqref{eq:BMCsym disk}. Let $\feven = g + h$ and $\fodd = g-h$. 
Then, $\tilde{f}$ can be written as a sum of two BMC functions~\cite[Section 3.2]{townsend2015computing}:
\begin{align}
\tilde{f} = 
\frac12
\underbrace{
\begin{bmatrix}
\feven & \feven \\
{\tt flip}(\feven) & {\tt flip}(\feven)
\end{bmatrix}}_{\displaystyle =\fteven}
+
\frac12
\underbrace{
\begin{bmatrix}
\fodd & -\fodd \\
-{\tt flip}(\fodd) & {\tt flip}(\fodd)
\end{bmatrix}}_{\displaystyle =\ftodd},
\label{eq:EvenOddSplit}
\end{align}
i.e., $\tilde{f} = \frac12(\fteven + \ftodd)$. From~\eqref{eq:EvenOddSplit}, we can deduce that $\fteven$ is an even function in $\rho$ and 
$\pi$-periodic in $\theta$, whereas $\ftodd$ is an odd function in $\rho$ and $\pi$-antiperiodic in $\theta$. This is equivalent to the 
statement of parity properties (i) and (ii), as $\pi$-periodic functions have only even Fourier modes and $\pi$-antiperiodic functions 
have only odd Fourier modes.
While many techniques enforce these parity-based restrictions on the Fourier and Chebyshev coefficients of functions on the disk, 
relating these properties more generally to BMC-II functions allows one to apply these restrictions directly through the \emph{values} of 
a function, without ever using the coefficients. This is the premise our GE procedure operates on. 

As shown in Section 3.2 of~\cite{townsend2015computing}, we can write the GE step~\eqref{eq:GEstructurePreservingStepPseudoinverse} as
\begin{equation}
  \tilde{f}(\theta,\rho) \; \longleftarrow\;  \frac12(\fteven(\theta,\rho) - \meven\fteven(\theta^*,\rho)\fteven(\theta,\rho^*)) + \frac12(\ftodd(\theta,\rho) - \modd\ftodd(\theta^*,\rho)\ftodd(\theta,\rho^*)),
\label{eq:FirstGEstepSplit}
\end{equation}
where $\meven$ and $\modd$ are values\footnote{ Note that $\meven$ and $\modd$ are not related to  $m$ in~\eqref{eq:FourierChebyshev}.} derived from the spectral decomposition of $\Minv$, and are given by
\begin{align}
(\meven,\modd) =
\begin{cases}
(1/(a+b),0), & \text{if $|a-b|<\alpha |a+b|$}, \\
(0,1/(a-b)), & \text{if $|a+b|<\alpha |a-b|$}, \\
(1/(a+b),1/(a-b)), & \text{otherwise}. \\
\end{cases}
\label{eq:mvalues}
\end{align}
Here, $0 < \alpha < 1$ is referred as the \textit{coupling parameter} for the GE procedure, and $\alpha$ determines $\epsilon$ in $\Minv$: 
$\alpha = \epsilon / \sigma_1(M) = \epsilon / \max\{ |a+b|, |a-b|\}$. The decomposition in~\eqref{eq:FirstGEstepSplit} reveals an alternative 
interpretation of structure-preserving GE as a coupled process involving two standard GE procedures. If either of the first two cases 
of~\eqref{eq:mvalues} is chosen, GE with complete pivoting is performed on only one term in~\eqref{eq:FirstGEstepSplit}, resulting in a rank 1 update. In the third 
case of~\eqref{eq:mvalues}, $\Minv = M^{-1}$, and a rank $2$ update is achieved. It is desirable to perform as many rank $2$ updates as 
possible, as this reduces the overall number of pivot searches required by the GE procedure. 
Too small a value of $\alpha$ may allow the use of $M^{-1}$ when it is ill--conditioned, but choosing $\alpha$ too close to $1$ hampers 
the efficiency of the procedure. We have experimented with several values for $\alpha$ and find that $\alpha = 1/100$ works well in practice. The role of $\alpha$ in the convergence rate of the GE procedure is discussed further in Section~\ref{sec: convergence}.

Following~\cite{townsend2015computing}, we can exploit~\eqref{eq:FirstGEstepSplit} to write the low rank approximation to $\tilde{f}$ as 
\begin{align}
 \tilde{f}(\theta, \rho) \approx \sum_{j=1}^K d_j c_j(\rho)r_j(\theta) =  \sum_{j=1}^{\Keven} \deven_j \ceven_j(\rho)\reven_j(\theta) + \sum_{j=1}^{\Kodd} \dodd_j \codd_j(\rho)\rodd_j(\theta),
\label{eq:lowrankRepresentationSplit}
\end{align} 
where $\Keven + \Kodd=K$. Here, the functions $\ceven_j(\rho)$ and $\reven_j(\theta)$ for $1\leq j\leq \Keven$ are even and $\pi$-periodic, 
respectively, while $\codd_j(\rho)$ and $\rodd_j(\theta)$ for $1\leq j\leq \Kodd$ are odd and $\pi$-antiperiodic, respectively. The pivots, 
$\deven$ and $\dodd$, are related to the $2\times2$ pivot matrix given in~\eqref{eq:pivotblockcoords}~\cite{townsend2015computing}. 
If $f$ is non-zero at the origin, the first step of the GE procedure is given by~\eqref{eq: GE zero pole step}. 
This chooses $\ceven_1(\rho) = \tilde{f}(\theta^{*},\rho)$, $\reven_1(\theta)=1$, and $\deven_1=1$, so that for $j > 1$, $c_j(0)=0$. 
Crucially, this ensures that parity property (iii) is preserved in the decomposition.  

Using~\eqref{eq:lowrankRepresentationSplit}, the parity properties of $\tilde{f}$ are given explicitly, and this can be used to simplify 
algorithmic procedures. An example is given in Section~\ref{sec:diskint} on integration. This expression also clarifies why our 
approximants are stable for differentiation (see Section~\ref{sec:diskdiff}). 

\subsection{Convergence}
\label{sec: convergence}

In~\cite{townsend2015computing}, it is shown that  BMC structure-preserving GE exactly 
recovers BMC functions of finite rank. In this section, we prove that for certain analytic functions of infinite rank,  structure-preserving GE  converges at a geometric rate. 
Specifically, we will consider a function $\tilde{f}$ that 
is analytically continuable in at least one variable to a sufficiently large region of the complex plane.
We characterize this region formally using the concept of a \textit{stadium}.
\begin{definition}[Stadium] The stadium $S_\beta$ with radius $\beta > 0$ is the region in the complex plane consisting of all numbers lying at a distance $\leq \beta$ from an interval $[c, d]$, i.e.,
\[
S_\beta = \left\{z\in\mathbb{C} : \inf_{x\in[c,d]} |x - z| \leq \beta \right\}.
\]
\end{definition} To understand convergence, we will view structure-preserving GE as a coupled procedure involving the functions $\tilde{\feven}$ and $\tilde{\fodd}$ defined in Section~\ref{sec:parity}. The proof requires an examination of the error produced after applying the GE step~\eqref{eq:FirstGEstepSplit}, and we see in~\eqref{eq:mvalues} that there are three cases to consider. Bounds on the error are intimately tied to the \textit{growth factors} of the GE procedures that are applied to \smash{$\tilde{\feven}$} and\smash{ $\tilde{\fodd}$}. The growth factors quantify the worst possible increase in the absolute maximum of the function after a rank one update. Geometric convergence can be proven if the size of the  stadium in which $\tilde{f}$ is analytic is large enough to counteract the potential growth induced by GE.

The connection between the region of analyticity and the GE growth factor is made clear in the proof of Theorem~$8.1$ in~\cite{Townsend_15_01}, which shows that iterative GE with complete pivoting as in~\eqref{eq:GEstep} converges geometrically for functions that are analytic within a sufficiently large stadium.
In the first or second case of~\eqref{eq:mvalues}, standard GE with complete pivoting is applied to either \smash{$\fteven$} or \smash{$\ftodd$}, and we may use \smash{Theorem~$8.1$} directly. In the third case of~\eqref{eq:mvalues}, two GE procedures are performed: a GE step with complete pivoting is applied to whichever of the two functions \smash{$\fteven$} or \smash{$\ftodd$} has a larger absolute maximum value, and a GE step with a nonstandard pivoting strategy is applied to the other function. 
If a bound on the growth factor of this nonstandard GE step is known, then as long as \smash{$\tilde{f}$} is assumed to be analytic in an appropriately-sized region of the complex plane, we can apply a mild generalization of Theorem 8.1.  For this reason, we require the following lemma associated with the third case of~\eqref{eq:mvalues}: 

\begin{lemma}
\label{lemma:secondaryGE}
The growth factor for the nonstandard GE procedure applied within BMC structure-preserving GE is bounded above by  $1+\alpha^{-1}$, where $\alpha$ is the coupling parameter  in~\eqref{eq:mvalues}.
\end{lemma}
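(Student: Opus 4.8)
The plan is to make the ``nonstandard GE step'' explicit and then bound its one-step growth by a direct triangle-inequality estimate, with the coupling parameter $\alpha$ entering only through a lower bound on the pivot value. First I would record the algebra of the split $\tilde f = \tfrac12(\fteven+\ftodd)$: writing $a=g(\theta^*,\rho^*)$ and $b=h(\theta^*,\rho^*)$, one has $a+b=\fteven(\theta^*,\rho^*)$ and $a-b=\ftodd(\theta^*,\rho^*)$, so the values of $\fteven$ and $\ftodd$ at the pivot are precisely the two numbers whose moduli determine $\sigma_1(M)=\max\{|a+b|,|a-b|\}$. In the third case of \eqref{eq:mvalues} we have $\Minv=M^{-1}$, and \eqref{eq:FirstGEstepSplit} performs one complete-pivoting GE step on whichever of $\fteven,\ftodd$ has the larger absolute maximum and one rank-one step, using the \emph{same} pivot location, on the other; the latter is the nonstandard step whose growth factor we must bound.

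Without loss of generality assume $\fteven$ has the larger absolute maximum, and let $\|\cdot\|_\infty$ denote the absolute maximum over $[-\pi,\pi]\times[-1,1]$. Since complete pivoting maximizes $\sigma_1(M)=\max\{|\fteven(\theta^*,\rho^*)|,|\ftodd(\theta^*,\rho^*)|\}$, the selected pivot satisfies $|a+b|=\|\fteven\|_\infty\ge\|\ftodd\|_\infty$. The nonstandard step then updates $\ftodd$ by the rank-one subtraction
\[
\ftodd(\theta,\rho)\;\longleftarrow\;\ftodd(\theta,\rho)-\frac{\ftodd(\theta^*,\rho)\,\ftodd(\theta,\rho^*)}{\ftodd(\theta^*,\rho^*)},
\]
whose pivot value is $\ftodd(\theta^*,\rho^*)=a-b$, generally \emph{not} the absolute maximum of $\ftodd$. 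Bounding each row slice $\ftodd(\theta,\rho^*)$ and column slice $\ftodd(\theta^*,\rho)$ in modulus by $\|\ftodd\|_\infty$ and applying the triangle inequality gives, for the updated function $\ftodd_{\mathrm{new}}$,
\[
\|\ftodd_{\mathrm{new}}\|_\infty\;\le\;\|\ftodd\|_\infty+\frac{\|\ftodd\|_\infty^2}{|a-b|}=\|\ftodd\|_\infty\Bigl(1+\frac{\|\ftodd\|_\infty}{|a-b|}\Bigr).
\]

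The crux is to lower bound $|a-b|$ relative to $\|\ftodd\|_\infty$, and this is exactly where the third-case hypothesis enters. Being in the third case of \eqref{eq:mvalues} means $|a-b|\ge\alpha|a+b|$; combining this with the complete-pivoting identity $|a+b|=\|\fteven\|_\infty$ and the assumption $\|\fteven\|_\infty\ge\|\ftodd\|_\infty$ yields $|a-b|\ge\alpha\|\ftodd\|_\infty$, so that $\|\ftodd\|_\infty/|a-b|\le\alpha^{-1}$. Substituting gives $\|\ftodd_{\mathrm{new}}\|_\infty\le(1+\alpha^{-1})\|\ftodd\|_\infty$, the claimed bound. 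I expect the main obstacle to be conceptual rather than computational: one must recognize that the usable lower bound on the (nonstandard) pivot does \emph{not} come from any pivoting rule applied to $\ftodd$ itself, but from chaining the third-case inequality $|a-b|\ge\alpha|a+b|$ with the identity $|a+b|=\|\fteven\|_\infty$ coming from the pivot search on the dominant function. Once this chain is in place, the remaining estimate is the routine two-term triangle-inequality bound familiar from the single-step growth analysis of complete-pivoting GE.
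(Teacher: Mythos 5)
Your proof is correct and takes essentially the same route as the paper's: your quantities $|a+b|$ and $|a-b|$ are exactly the paper's $\sigma_1(M)$ and $\sigma_2(M)$, your WLOG $\|\fteven\|_\infty\ge\|\ftodd\|_\infty$ matches the paper's WLOG $|\modd|>|\meven|$, and both arguments combine the same two-term triangle-inequality estimate with the chain $\sigma_2(M)\ge\alpha\,\sigma_1(M)\ge\alpha\,\|\ftodd\|_\infty$ (yours written as $|a-b|\ge\alpha|a+b|=\alpha\|\fteven\|_\infty\ge\alpha\|\ftodd\|_\infty$) to conclude the growth factor is at most $1+\alpha^{-1}$.
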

\begin{proof}
Consider performing one step of BMC structure-preserving GE on $\tilde{f}$ by operating on $\fteven$ and $\ftodd$ from~\eqref{eq:EvenOddSplit}, 
and suppose we are in the third case of~\eqref{eq:mvalues}. Without loss of generality, suppose that  $|\modd| >  |\meven|$. 
Then, $|\meven|= 1/\sigma_1(M)$,  $|\modd| = 1/\sigma_2(M),$ and a nonstandard GE step is performed on $\ftodd$ using the pivot $\modd $. Here, $\sigma_k(M)$ denotes the $k$th singular value of $M$.

After the nonstandard GE step is applied, the supremum norm of the residual  is
\begin{equation}
\label{eq:onestep}
 \| \etodd_1\|_\infty = \left\| \ftodd  - \textnormal{sgn}(\modd)\dfrac{\ftodd (\theta_*, \cdot)
\ftodd (\cdot, \rho_*)}{\sigma_2(M)}  \right\|_\infty \leq \| \ftodd\|_{\infty}  + \dfrac{ \| \ftodd\|_\infty^2 }{\sigma_2(M)},  
\end{equation}
where $(\theta_*, \rho_*) \in [-\pi, \pi] \times [0, 1]$ is the location of the pivot in the first quadrant. 

Since we are in the third case of~\eqref{eq:mvalues}, we have $\sigma_2(M) \geq \alpha\sigma_1(M)$, and therefore $\sigma_2(M) \geq \alpha \|\ftodd \|_\infty$. Applying these results to~\eqref{eq:onestep} gives that
\begin{equation}
 \| \etodd_{1}\|_\infty  \leq (1+\alpha^{-1}) \| \ftodd\|_{\infty} , 
\end{equation}
i.e., the growth factor for  the nonstandard GE step  cannot exceed $1+\alpha^{-1}$. 
\end{proof}

Since bounds on the growth factors are known for each GE procedure applied on \smash{$\fteven$} and \smash{$\ftodd$},  geometric convergence of  the BMC structure-preserving GE can now be proven. A theorem analogous to the one below holds with the roles of $\theta$ and $\rho$ exchanged. 
\begin{theorem}
\label{thm: thetheorem}
Let $\tilde{f}:[-\pi,\pi]\times[-1,1]\rightarrow \mathbb{R}$ be a 
 BMC function such that $\tilde{f}(\theta,\cdot)$ is continuous for any $\theta \in [-\pi,\pi]$ 
 and $\tilde{f}(\cdot,\rho)$ is analytic and uniformly bounded 
 in a stadium $S_\beta$ of radius $\beta=\max(2,1+\alpha^{-1}) 2\pi \kappa$, $\kappa>1$, for any $\rho \in[-1,1]$.
 Then, there exists a constant $C>0$ such that  
\[ \|\tilde{f}-\tilde{f}_k\|_\infty = \|\tilde{e}_k\|_\infty \leq C\mu^{-k},\]
where $\mu = \min\{ \kappa, \alpha^{-1}\}$,  $\alpha$ is the coupling parameter described in~\eqref{eq:mvalues}, and $\tilde{f}_k$ is the approximant constructed after $k$ steps of the BMC structure-preserving GE procedure. 
\end{theorem}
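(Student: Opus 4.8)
The plan is to reduce the convergence of BMC structure-preserving GE to the convergence of two coupled scalar GE procedures, one acting on $\fteven$ and one on $\ftodd$, and then to invoke a generalization of Theorem~8.1 of~\cite{Townsend_15_01} for each. First I would use the parity splitting $\tilde{f} = \frac12(\fteven+\ftodd)$ from~\eqref{eq:EvenOddSplit} together with the decomposition~\eqref{eq:FirstGEstepSplit} of a single structure-preserving GE step, so that the residual after $k$ steps satisfies $\tilde{e}_k = \frac12(\eteven+\etodd)$ and hence $\|\tilde{e}_k\|_\infty \le \frac12(\|\eteven\|_\infty + \|\etodd\|_\infty)$. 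It therefore suffices to bound the residuals of the two coupled procedures separately. I would also record that, by the $2\pi$-periodicity of $\tilde{f}$ in $\theta$ and the glide-reflection structure~\eqref{eq:depictBMC1}, both $\fteven(\cdot,\rho)$ and $\ftodd(\cdot,\rho)$ inherit analyticity and uniform boundedness in the stadium $S_\beta$ from the hypothesis on $\tilde{f}(\cdot,\rho)$, up to the harmless shift and reflection induced by the defining functions $g$ and $h$.

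Next I would classify each step using~\eqref{eq:mvalues}. Since the pivot $(\theta^*,\rho^*)$ maximizes $\sigma_1(M)=\max\{|a+b|,|a-b|\}$, which equals the larger of the current supremum norms of $\fteven$ and $\ftodd$, complete pivoting is always applied to whichever of the two functions currently has the larger supremum norm, while in the third case the other function receives a step whose pivot sits at its partner's maximizer rather than its own. The growth factor of the complete-pivoting steps is at most $2$, and by Lemma~\ref{lemma:secondaryGE} the growth factor of the nonstandard steps is at most $1+\alpha^{-1}$. Consequently every step applied to either coupled procedure has growth factor at most $g:=\max(2,1+\alpha^{-1})$, which is precisely why the stadium radius is taken to be $\beta = g\,2\pi\kappa$: this single choice makes the generalized theorem applicable to both procedures simultaneously.

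With the hypotheses in place, I would then apply the mild generalization of~\cite[Thm.~8.1]{Townsend_15_01} to each procedure. The generalization must allow GE steps whose pivots are not the residual's own global maxima but are guaranteed, through the third case of~\eqref{eq:mvalues} where $\sigma_2(M)\ge\alpha\sigma_1(M)$, to be within a factor $\alpha$ of the complete-pivot value, and whose per-step amplification is controlled by $g$. The analyticity of $\fteven$ and $\ftodd$ in $S_\beta$ then furnishes a node-polynomial (contour-integral) estimate that beats this amplification, yielding geometric decay of each residual. Combining the two bounds through $\|\tilde{e}_k\|_\infty \le \frac12(\|\eteven\|_\infty + \|\etodd\|_\infty)$ and collecting constants into $C$ produces the claimed estimate $\|\tilde{e}_k\|_\infty \le C\mu^{-k}$.

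The main obstacle is establishing the generalized Theorem~8.1 and extracting the precise rate $\mu=\min\{\kappa,\alpha^{-1}\}$. Two points require care. First, in the third case the nonstandard step interpolates at nodes inherited from its coupled partner, so the standard complete-pivoting convergence argument, which relies on the nodes being the residual's own maxima, must be adapted to nodes that are only within a factor $\alpha$ of optimal; this is where the $\alpha^{-1}$ contribution to $\mu$ originates, as the analyticity margin $\kappa$ competes against the growth and pivot-deficiency factor governed by $\alpha$. Second, one must bookkeep how many steps each of the two procedures actually receives, since a single structure-preserving step may update $\fteven$ only, $\ftodd$ only, or both; using~\eqref{eq:mvalues} I would argue that neither component stagnates, so that the aggregate decay is governed by $\mu$ rather than by a starved component. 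Verifying these two points rigorously, rather than the setup, is the crux of the proof.
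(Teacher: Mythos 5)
Your setup---the parity splitting, the case analysis on \eqref{eq:mvalues}, Theorem~8.1 of \cite{Townsend_15_01} for the complete-pivoting steps, and Lemma~\ref{lemma:secondaryGE} with the enlarged stadium absorbing the growth of the nonstandard step---is the same skeleton as the paper's proof. However, both of the points that you yourself flag as the crux are resolved incorrectly in your plan, and as written the argument does not close. First, the $\alpha^{-1}$ term in $\mu=\min\{\kappa,\alpha^{-1}\}$ does \emph{not} originate in a ``pivot-deficiency'' analysis of the nonstandard step in Case 3. In the paper, that nonstandard step contracts at the same rate $\kappa^{-1}$ as a complete-pivoting step: its worse growth factor $1+\alpha^{-1}$ (Lemma~\ref{lemma:secondaryGE}) is exactly what the radius $\beta=\max(2,1+\alpha^{-1})\,2\pi\kappa$ is designed to absorb in the mild generalization of Theorem~8.1, so no factor of $\alpha^{-1}$ is lost there. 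The $\alpha^{-1}$ enters instead in Cases 1 and 2, where only one component is updated: the untouched component, say $\etodd_{k+1}=\etodd_k$, satisfies $\|\etodd_k\|_\infty<\alpha\|\eteven_k\|_\infty\le\mu^{-1}\|\eteven_k\|_\infty$ because $\mu\le\alpha^{-1}$, so it sits a full factor $\mu^{-1}$ below the current bound even though it did not move.

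Second, and consequently, the ``non-starvation'' bookkeeping you propose is both unnecessary and unprovable: nothing prevents one component from stagnating for arbitrarily many consecutive steps (Cases 1 and 2 update only one of the two functions), so you cannot show each component receives enough GE steps. The missing idea is to run a single induction on the quantity $\max\{\|\eteven_k\|_\infty,\|\etodd_k\|_\infty\}$ and verify that in \emph{every} one of the three cases this maximum contracts by at least $\mu^{-1}$: by $\kappa^{-1}\le\mu^{-1}$ for whichever components are actually updated, and by the factor-$\alpha\le\mu^{-1}$ observation above for the untouched one. With that invariant the per-component step count is irrelevant, and the theorem follows at once from $\|\tilde{e}_k\|_\infty\le\|\eteven_k\|_\infty+\|\etodd_k\|_\infty$ after choosing $C$ so that both initial residual components are bounded by $C/2$.
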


\begin{proof}
For $k \geq 0$,  $\tilde{e}_k$ is a BMC function and can be written as the sum of 
an even $\pi$-periodic and odd $\pi$-antiperiodic function, i.e., $\tilde{e}_k = \eteven_k + \etodd_k$ (see  Section~\ref{sec:parity}). 
Let $\mu = \min\{ \kappa, \alpha^{-1} \}$, 
and choose a constant $C > 0$ so that $ \|\tilde{e}_0^+\|_\infty \leq C/2$ and $ \|\tilde{e}_0^-\|_\infty \leq C/2$. We will show by induction that  $\|e_k\|_\infty \leq C \mu^{-k}$ for all $k>0$.   

 When $k = 0$, 
 $\max\{\|\eteven_0\|_\infty, \|\etodd_0\|_\infty\} \leq C/2$.
 Suppose that for $k > 0$, the following induction hypothesis holds: 
 \begin{equation} 
\label{eq:indhyp} 
 \max\{\|\eteven_k\|_\infty, \|\etodd_k\|_\infty\} \leq (C/2) \mu^{-k}.
 \end{equation}
Consider the next structure-preserving GE step. Using~\eqref{eq:mvalues}, there are three cases to consider.

\underline{Case 1}: Here, $\|\etodd_k\|_\infty < \alpha \|\eteven_k\|_\infty$, and only $\eteven_k$ is updated (see Section~\ref{sec:parity}).
This step is equivalent to performing a standard GE step with complete pivoting as in~\eqref{eq:GEstep} on $\eteven_k$. 
By Theorem~$8.1$ in~\cite{Townsend_15_01}, we have

  \[\|\eteven_{k+1}\|_\infty \leq \kappa^{-1}\|\eteven_k\|_\infty.\] Since $\etodd_{k+1}=\etodd_k$, we find that \[\|\etodd_{k+1}\|_\infty = \|\etodd_k\|_\infty < \alpha \|\eteven_k\|_\infty,\] and using the definition of $\mu$ and~\eqref{eq:indhyp}, we conclude that
\begin{equation}
\label{eq:case1res}
\max\{ \|\eteven_{k+1}\|_\infty, \|\etodd_{k+1}\|_\infty\} \leq \mu^{-1} \max\{\|\eteven_{k}\|_\infty, \|\etodd_{k}\|_\infty\} \leq (C/2)\mu^{-(k+1)}.
\end{equation}

\underline{Case 2}: Here, $\|\eteven_k\|_\infty < \alpha \|\etodd_k\|_\infty$, and only $\etodd_k$ is updated. This is equivalent to Case 1 with the roles of $\eteven_k$ and $\etodd_k$ interchanged. 

\underline{Case 3}: 
Without loss of generality, suppose that $|\modd| > |\meven|$. Then, a standard GE step with complete pivoting  is applied to $\eteven_k$, and a GE step with nonstandard pivoting is performed on $\etodd_k$. 
As in Case 1, we find that    
\[ \|\eteven_{k+1}\|_\infty \leq \kappa^{-1}\|\eteven_k\|_\infty.\]
For $\etodd_{k+1}$ we use the bound on the growth factor from Lemma~\ref{lemma:secondaryGE} to apply a slight generalization of Theorem 8.1 in~\cite{Townsend_15_01}, finding that 
\[ \|\etodd_{k+1}\|_\infty \leq \kappa^{-1}\|\etodd_k\|_\infty.\] 
It follows from the definition of $\mu$ and~\eqref{eq:indhyp} that 
\[ \max\{\|\eteven_{k+1}\|_\infty, \|\etodd_{k+1}\|_\infty \} \leq (C/2)\mu^{-(k+1)}.\]
 By induction, we have that
 \[ \max\{\|\eteven_k\|_\infty, \|\etodd_k\|_\infty\} \leq (C/2) \mu^{-k}, \qquad k \geq 0, \]
and the result follows from the fact that $\|\tilde{e}_k\|_\infty \leq \|\eteven_k\|_\infty+ \|\etodd_k\|_\infty$. 
\end{proof}

The assumptions required on \smash{$\tilde{f}$} in Theorem~\ref{thm: thetheorem} are rather restrictive, as the proof of convergence requires us to consider GE growth rates that account for the worst-case scenario. Empirically, we observe convergence for a much broader class of functions, and at rates that are asymptotically optimal. This is described in the next section. 

\subsection{Near-optimality} \label{sec: BMCsvd}
While Section~\ref{sec: convergence} proves that convergence of the GE procedure in Figure~\ref{fig:GaussianElimination} is geometric when $f$ is analytic in a sufficiently large region of the complex plane, we observe in practice that the procedure converges at near-optimal rates for functions that are only a few times 
differentiable. 

If $\tilde{f}$ is Lipschitz continuous with respect to both variables for $(\theta, \rho) \in [-\pi, \pi]\times [-1, 1]$,
then the best rank $K$ approximation to $\tilde{f}$ is given by the Karhunen-Lo\`{e}ve expansion, also called the
  the singular value decomposition (SVD), of $\tilde{f}$:
     \begin{equation} 
    \label{eq:BMCSVD}
      \tilde{f}(\theta,\rho) = \sum_{j=1}^\infty \sigma_j u_j(\rho)v_j(\theta), \qquad (\theta,\rho)\in [-\pi, \pi]\times [-1, 1]. 
     \end{equation} 
The non-increasing sequence $\sigma_1\geq \sigma_2\geq \cdots$ of real, nonnegative numbers are the \textit{singular values} 
of $\tilde{f}$. The continuous \textit{singular functions} $\{u_j(\rho)\}$ and $\{v_j(\theta)\}$ each form an orthonormal 
set of functions with respect to the standard $L_2$ inner product. A best rank $K$ approximation to $\tilde{f}$, in the sense of the $L_2$ norm, is constructed by 
truncating~\eqref{eq:BMCSVD} after $K$ terms~\cite{Schmidt_1908_01}. 

For reasons closely related to those discussed in Section~\ref{sec:parity}, the SVD preserves the BMC structure of 
$\tilde{f}$~\cite{wilber2016}. Unfortunately, the high cost of computing the SVD makes this an untenable approach for constructing 
low rank approximants to $\tilde{f}$ in practice. 
\begin{figure} 

\begin{minipage}{.49\textwidth}
\begin{overpic}[width=\textwidth]{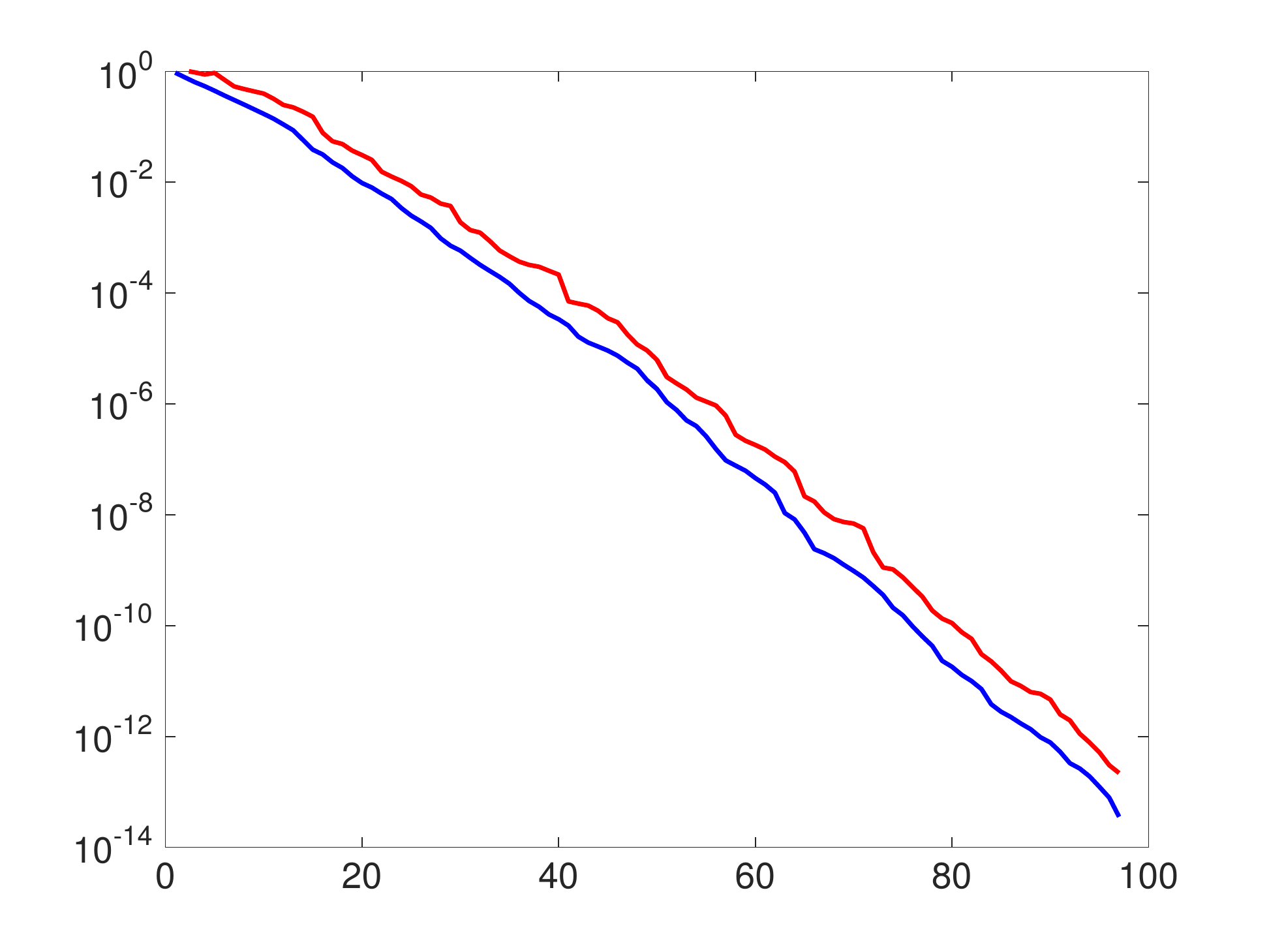} 
 \put(50,72) {$\phi_1$}
 \put(50,50) {\rotatebox{-40}{GE}}
 \put(45,43) {\rotatebox{-40}{SVD}}
 \put(27,0) {Rank of approximant}
 \put(0,30) {\rotatebox{90}{$L_2$ error}}
 \end{overpic}
 \end{minipage}
 \begin{minipage}{.49\textwidth}
 \begin{overpic}[width=\textwidth]{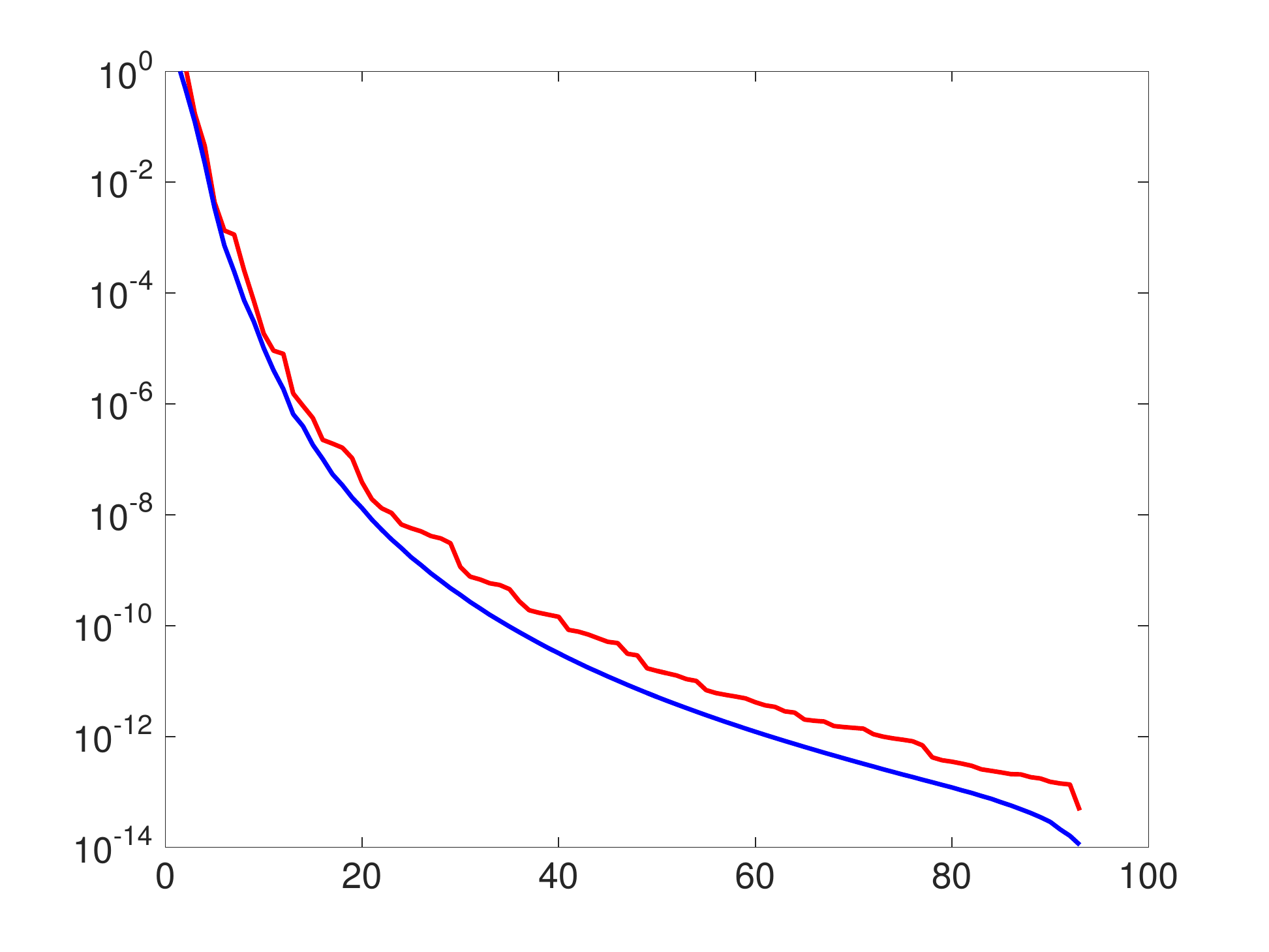} 
  \put(50,72) {$\phi_2$}
 \put(40,30) {\rotatebox{-20}{GE}}
 \put(40,20) {\rotatebox{-22}{SVD}}
 \put(27,0) {Rank of approximant}
 \put(0,30) {\rotatebox{90}{$L_2$ error}}
  \end{overpic}
  \end{minipage}
\caption{A comparison of low rank approximations to the functions in~\eqref{eq:PhiFunctions} computed using the SVD and the iterative GE procedure.  The $L_2$ error is plotted against the rank of the approximants to $\phi_1$ and $\phi_2$.  The $L_2$ error given by the SVD approximants are optimal and we observe that that the low rank approximants constructed by 
the GE procedure are near-optimal.}
\label{fig:SVDvsGE}
\end{figure}
Nonetheless, approximants constructed via the SVD are optimal with respect to $\|\cdot\|_2$, and this provides a way to check the quality of the low rank approximants constructed 
by our GE procedure. Figure~\ref{fig:SVDvsGE} displays the $L_2$ error over $[-\pi,\pi]\times[-1,1]$ for rank $K$ approximations constructed via the SVD and the GE 
procedure for the following two BMC-II functions: 
\begin{equation}
\begin{aligned}
 &\phi_1(\theta,\rho) = \textnormal{exp}\left[-(\cos (11 \rho\sin\theta) + \sin (\rho\cos\theta) )^2\right],\\ 
 &\phi_2(\theta,\rho) = (1-\omega)_{\textnormal{{\tiny +}}}^6\left(35(\omega)^2+18\omega+3\right),
\end{aligned}
\label{eq:PhiFunctions}
\end{equation}
where $\omega(\theta,\rho) = \left((\rho\cos\theta-.2)^2+(\rho\sin\theta-.2)^2\right)^{1/2}$ and $\zeta_{\textnormal{{\tiny +}}} = \max\{\zeta, 0 \}$. 
The error given by the SVD behaves in accordance with known theoretical results, decaying geometrically for the function $\phi_1$ and at an 
algebraic rate for $\phi_2$~\cite{townsend2014computing}. In experiments, it is observed that our GE procedure constructs near-best low rank 
approximants to smooth BMC functions. 

\section{Algorithms for numerical computation with functions on the disk}\label{sec:diskalgorithms} 
In this section, we describe several of the algorithms used in the Diskfun software. These methods rely on the 
fact that every smooth function $f$ on the disk is associated with a BMC-II function $\tilde{f}$ that is periodic in $\theta$. We compute with 
a low rank approximation to $\tilde{f}$ as in~\eqref{eq: low_rank_rep}, which is constructed by the GE procedure in 
Figure~\ref{fig:GaussianElimination}. We rely on the fact that in~\eqref{eq: low_rank_rep}, each $c_j(\rho)$ and $r_j(\theta)$ can be 
approximated by a Chebyshev and Fourier series, respectively, so that for $1 \leq j \leq K$, 
\begin{equation}
c_j(\rho) \approx \sum_{\ell=0}^{m-1} a_{\ell}^j ~T_\ell(\rho), \qquad
r_j(\theta) \approx \sum_{k=-n/2}^{n/2 -1} b_k^j~ e^{i k \theta }, 
\label{eq:1D_expansions}
\end{equation}
where $T_\ell(\rho)$ is the Chebyshev polynomial of degree $\ell$, and $n$ is an even integer.

The algorithms for computing with functions represented in  Chebyshev and Fourier bases differ considerably from one another. 
However, implementation in the Chebfun environment is significantly simplified due to its underlying object-oriented class structure. 
For example, Chebfun overloads commands such as \texttt{sum(g)} (integration) or \texttt{diff(g)} (differentiation), so that the same syntax 
executes different underlying algorithms based on whether the object \texttt{g} is represented by a Chebyshev series or a Fourier 
series~\cite{wright2015extension}. 

\subsection{Pointwise evaluation}\label{sec:diskeval} To efficiently evaluate $ \tilde{f}$ at a fixed point $(\theta_*, \rho_*)$, we 
use~\eqref{eq: low_rank_rep}, observing that
\begin{equation}
 \tilde{f}(\theta_*, \rho_*) \approx \sum_{j=1}^{K} d_j c_j(\rho_*) r_j(\theta_*).  
\label{eq:disk_pointeval} 
\end{equation}
Evaluation of $ \tilde{f}$ proceeds as $2K$ 1D function evaluations. Functions $c_j(\rho )$, $1 \leq j \leq K$, are evaluated using 
Clenshaw's algorithm~\cite[Ch.~19]{trefethen2013approximation}, and functions $r_j(\theta)$, $1 \leq j \leq K$, are evaluated using 
Horner's scheme~\cite{wright2015extension}. Altogether, this requires $\mathcal{O}(K( m+n ))$ operations. The algorithm 
is implemented in the \texttt{feval} command.

\subsection{Computation of Chebyshev--Fourier coefficients}\label{sec:diskcoeffs}
The low rank form of $\tilde{f}$ facilitates the use of fast transform methods based on the FFT. We can write the truncated tensor product 
Chebyshev--Fourier expansion of $ \tilde{f}$ as follows: 
\begin{align}
  \tilde{f}(\theta,\rho) \approx \sum_{k=-n/2}^{n/2-1}\sum_{\ell=0}^{m-1} X_{  \ell k }  T_{\ell}(\rho) e^{ik\theta}, \label{eq:2Dfourierchebyshevdisk}
\end{align}   
where $X$ is a matrix whose entries are the 2D Chebyshev--Fourier coefficients of $\tilde{f}$. Using the low rank form of $\tilde{f}$ 
given by~\eqref{eq: low_rank_rep}, 
the matrix $X$ can also be expressed in low rank form as  \smash{$X = ADB^T$}.
Here, $A$ is an $m \times K$ matrix whose $j$th column contains the coefficients $\{a_{\ell}^j\}$ from~\eqref{eq:1D_expansions}, 
$D$ is a $K$-by-$K$ diagonal matrix consisting of the pivot values $\{d_j\}$, and $B$ is an $n \times K$ matrix whose $j$th column 
contains the coefficients $\{b_{k}^j\}$ from~\eqref{eq:1D_expansions}. 
Given a sample of $\tilde{f}$ on an $m \times n$ Chebyshev--Fourier grid, the direct computation of the Chebyshev--Fourier coefficients 
of $\tilde{f}$ costs $\mathcal{O}(mn\log(mn))$ operations. However, using the GE procedure in Section~\ref{sec:structGE}, the low rank 
form of $X$ can be found in only $\mathcal{O}(K^3+ K^2(m+n) + K(m\log m + n \log n))$ operations. This is because once the GE process 
adaptively selects the skeleton representing $\tilde{f}$ at a cost of $\mathcal{O}(K^3+ K^2(m+n))$, the coefficients 
in~\eqref{eq:1D_expansions} for every $c_j(\rho)$ and $r_j(\theta)$ in~\eqref{eq: low_rank_rep} can be found in only 
$\mathcal{O}( K(m\log m + n \log n))$ operations. 

Several procedures, such as integration and differentiation, can be executed using the low rank factorization of $X$. Using the 
command \texttt{coeffs2} in Diskfun, $X$ can be explicitly computed with an additional $\mathcal{O}(Kmn)$ operations. 

The above operation retrieves coefficients when supplied with a sample of $\tilde{f}$, and the inverse of this operation provides an 
efficient way to sample $\tilde{f}$ on a $m \times n$ Chebyshev--Fourier grid. Given $X$ in low 
rank form, this proceeds in $\mathcal{O}(K(m\log m + n \log n))$ operations; the algorithm is implemented in the {\tt sample} command.  


\subsection{Integration}\label{sec:diskint}
To integrate $ \tilde{f}(\theta, \rho)$ over the unit disk, we again take advantage of the low rank form of~\eqref{eq: low_rank_rep}, 
transforming the double integral into sums of 1D integrals:  
\begin{equation}
 \int_{-\pi}^{\pi} \int_{0}^{1}   \tilde{f}(\theta, \rho) \rho \, d\rho  \, d\theta   \approx \sum_{j=1}^{K} d_j\int_{-\pi}^{\pi} r_j(\theta) \, d\theta \int_{0}^{1} c_j(\rho) \rho \, d\rho.
\end{equation}
For integration of the periodic $r_j(\theta)$ functions, the trapezoidal rule is used. To evaluate \smash{$\int_{0}^{1} c_j(\rho) \rho \, d\rho$}, 
the coefficients for $\rho c_j(\rho)$ are computed, and then Clenshaw-Curtis quadrature is applied~\cite[Ch. 19]{trefethen2013approximation}.
These $2K$ 1D integrals can be computed in a total of $\mathcal{O}(Km)$ operations. This can be further reduced using~\eqref{eq:EvenOddSplit} 
since only the even, $\pi$-periodic terms will contribute to the value of the integral.
 
Integration is implemented in the {\tt sum2} command. For example, the integral of $f(x,y) = -x^2-3xy -(y-1)^2$ over the unit disk 
is $-3\pi/2$, and can be computed in Diskfun as
\begin{verbatim}
f = diskfun(@(x,y) -x.^2-3*x.*y -(y-1).^2);
sum2(f)
ans =
   -4.712388980384692
\end{verbatim}
The error is determined with \texttt{abs(sum2(f)+3*pi/2)}, which gives $1.7764 \times 10^{-15}$. 

\subsection{Differentiation}\label{sec:diskdiff} 
When considering derivatives on the disk, note that partial differentiation with respect to $\rho$ can lead to artificial singularities 
at $\rho=0$. For example, if $f(\theta, \rho) = \rho^2$, then $\partial f/\partial \rho = 2\rho$ , which is not smooth on the 
disk. In contrast, for a smooth function $\tilde{f}$, partial derivatives with respect to $x$ and $y$ will always be well-defined. 
For this reason, and because of the usefulness of these operators in vector calculus (see Section~\ref{sec:vcalc}), 
we consider efficient and stable ways to calculate $\partial \tilde{f}/\partial x$ and $\partial \tilde{f}/\partial y$.   

By~\eqref{eq:transfvars}, $\rho = \sqrt{x^2 + y^2}$, and $ \theta = \tan^{-1}(y/x)$, so the chain rule can be applied to obtain 
\begin{align}
\label{eq:partialx}
\frac{\partial \tilde{f}}{\partial x} = \cos \theta \frac{\partial \tilde{f}}{\partial \rho} - \frac{1}{\rho } \sin \theta \frac{\partial \tilde{f}}{\partial \theta},\\
\label{eq:partialy}
\frac{\partial \tilde{f}}{\partial y} = \sin \theta \frac{\partial \tilde{f}}{\partial \rho} + \frac{1}{\rho } \cos \theta \frac{\partial \tilde{f}}{\partial \theta}.
\end{align} 
Exploiting the low rank form given in (\ref{eq: low_rank_rep}),~\eqref{eq:partialx} can be written as 
\begin{equation}
\frac{\partial \tilde{f}}{\partial x} \approx \sum_{j=1}^{K} d_j \bigg(\frac{\partial c_j(\rho )}{\partial \rho }\bigg) \bigg( \cos\theta \,r_j(\theta) \bigg)  - \sum_{j=1}^{K} d_j \bigg(\frac{ c_j(\rho )} {\rho }\bigg) \bigg(\sin\theta\, \frac{\partial r_j(\theta)} {\partial \theta}\bigg).
\label{eq:partial_diff_disk}
\end{equation}
A similar expression can be used for~\eqref{eq:partialy}.

Here we make an important observation. The above result establishes that approximants on the disk are continuously differentiable at $\rho =0$ only 
if \smash{$\sum_{j=1}^{K}c_j(\rho)$} is divisible by $\rho$. Suppose $\tilde{f}$ is nonzero at $\rho=0$ and write the approximant 
in the form given by~\eqref{eq:lowrankRepresentationSplit}. Then, because of~\eqref{eq: GE zero pole step}, for 
$2 \leq j \leq K^{+}$, each term $\deven_j \ceven_j(\rho)\reven_j(\theta)$ is zero at $\rho=0$. Since $\ceven_j(\rho)$ 
is an even Chebyshev polynomial, it must be of the form $\alpha_1\rho^2+\alpha_2\rho^4+\dots+\alpha_q\rho^{2q}$, 
where $q \leq \lfloor (m-1)/2\rfloor$. This implies that these functions are all divisible by $\rho$. For $j=1$, 
$\reven_1(\theta)$ is constant by~\eqref{eq: GE zero pole step}, and so all terms in~\eqref{eq:partial_diff_disk} 
involving derivatives of $\reven_1(\theta)$ with respect to $\theta$ vanish. Since every $\codd_j(\rho)$ function 
for $1\leq j\leq K^{-}$ is an odd function, these are also always divisible by $\rho$.  This means that the
approximants constructed by the BMC-II structure preserving GE procedure have inherited properties ensuring that they are continuously differentiable at $\rho=0$.  

There are $2K$ 1D derivatives to compute in~\eqref{eq:partial_diff_disk}. 
Using~\eqref{eq:1D_expansions}, 
\begin{align}
 &\sin\theta\,\frac{\partial r_j(\theta)}{\partial \theta} = \sum_{k=-n/2}^{n/2-1} \frac{-(k+1)b_{k+1}^j+(k-1)b_{k-1}^j}{2}  e^{i k \theta},\\
 &\cos\theta\,r_j(\theta) = \sum_{k=-n/2}^{n/2-1} \frac{b_{k+1}^j+b_{k-1}^j}{2}  e^{i k \theta},\end{align}
where $b_{-n/2-1}$ and $b_{n/2}$ are set to zero.
Expanding each $c_j(\rho)$ as in~\eqref{eq:1D_expansions}, the recursion formula in~\cite[p. 34]{mason2002chebyshev} gives
the coefficients for $\partial c_j(\rho )/\partial \rho $ in $\mathcal{O}(m)$ operations. 
To determine $c_j(\rho)/\rho$, we construct the operator $B_{\rho}$, which represents multiplication by the function $g(\rho) = \rho$ 
in the Chebyshev basis. Then, 
\begin{equation}
  \frac{c_j(\rho)}{\rho} = \sum_{\ell=0}^{m-1} (B_{\rho}^{-1}\underline{a}^j)_\ell T_\ell(\rho), \qquad  B_{\rho} = \begin{pmatrix}0 & \tfrac{1}{2}\cr 1 & 0 & \tfrac{1}{2} \cr & \tfrac{1}{2} & \ddots & \ddots & \cr & & \ddots & \ddots & \tfrac{1}{2}\cr  &&& \tfrac{1}{2} & 0 & \tfrac{1}{2} \cr &&&& \tfrac{1}{2} & 0\end{pmatrix},
\label{eq:Mr}
\end{equation}
where $\underline{a}^{j} = (a_{0}^{j},\ldots,a_{m-1}^{j})^T$. Here, $B_{\rho}^{-1}$ exists because we choose $B_\rho$ to be of size $m \times m$, 
where $m$ is an even integer. Working directly with the coefficients via~\eqref{eq:Mr} is an efficient way to bypass the artificial singularity 
introduced in~\eqref{eq:partial_diff_disk}, without explicitly avoiding computation at $\rho=0$. In contrast, the standard procedure when working on function values with the DFS method uses a "shifted grid" strategy~\cite{Fornberg_95_01,heinrichs2004spectral}.

Differentiation is accessed through the $\texttt{diff}$ command in Diskfun, and requires $\mathcal{O}(K(m+n))$ operations.     

\subsection{The $L_2$ norm and the weighted singular value decomposition}\label{sec:svd}
 In Diskfun, \texttt{norm(f)} is overloaded to compute the $L_2$ norm on the disk, which is the continuous analogue of the 
 matrix Frobenius norm~\cite{Townsend_15_01}. This is one of the very few instances in Diskfun where it makes more 
 sense to work with $f$ directly, rather than $\tilde{f}$. The $L_2$ norm of a function $f$ on the disk 
 is given in polar coordinates as
  \begin{equation}
  \|f\|_2^2 = \int_{-\pi}^{\pi}\int_{0}^{1}  |f(\theta, \rho)|^2 \rho \, d\rho \, d\theta.
  \label{eq:fronorm}
  \end{equation}
 Computing $\|f\|_2$ using~\eqref{eq:fronorm} directly is numerically unstable, especially when $f$ is near zero. A more stable formulation 
 is given in~\cite{Schmidt_1908_01}: If $f$ is $L_2$ integrable, then
   \begin{equation}
   \label{eq:svdfronorm}
      \|f\|_2^2 = \sum_{j=1}^{\infty}\sigma_j^2, 
   \end{equation}
 where $\sigma_1 \geq \sigma_2 \geq \cdots \geq 0 $ are real and nonnegative numbers referred to as the \textit{(weighted) singular values} 
 of $f$. For this reason, we are interested in the weighted SVD of $f$, which is given by
   \begin{equation} 
         \label{eq:SVD}
           f(\theta,\rho) = \sum_{j=1}^\infty \sigma_j u_j(\rho)v_j(\theta), \qquad (\theta,\rho)\in [-\pi, \pi]\times [0, 1].
   \end{equation} 
 The singular functions $\{u_j(\rho)\}$, $\rho \in [0, 1]$, and $\{v_j(\theta)\}$, $\theta \in [-\pi, \pi]$, are orthonormal
 under the following inner products, respectively: 
  \begin{equation}
          \label{eq:diskinnerprod}
          <u, s>_\rho = \int_0^{1}u(\rho)\overline{s(\rho)} \rho  \, d\rho, \qquad
          <v, w> = \int_{-\pi}^{\pi} v(\theta)\overline{w(\theta)} \, d \theta, 
   \end{equation}
where the bars on $s$ and $w$ denote complex conjugation. 

The weighted SVD for a function on the disk is determined by applying a generalization of $QR$ factorization to quasimatrices.
Restricting the low rank approximation to $\tilde{f}$ given by~\eqref{eq: low_rank_rep} to $(\theta, \rho) \in [-\pi, \pi] \times [0, 1]$, we form a $[0, 1] \times K$ quasimatrix $C$ such that the $j$th column of $C$ is $c_j(\rho)$ in~\eqref{eq: low_rank_rep} restricted to the domain $[0, 1]$. Similarly, we form the $[-\pi, \pi] \times K$ quasimatrix $R$ such that the $j$th column of $R$ is $r_j(\theta)$. 
A $QR$ quasimatrix factorization with respect to the standard $L_2$ inner product on $[-\pi, \pi] \times [0, 1]$ is given in~\cite{trefethen2009householder} and selects the Legendre polynomials to orthogonalize against, and this procedure is applied to $R$. In consideration of~\eqref{eq:diskinnerprod}, $C$ is orthogonalized against the functions 
\[ 
\frac{\sqrt{2}}{J_1(\omega_{k})}J_{0}(\omega_{k}\rho), \qquad k = 1, 2, \dots,
\] 
where $J_{\nu}$ is the Bessel function of order $\nu$, and $\omega_k$ is the $k$th positive root of $J_0(\rho)$. 
This finds $\{u_j(\rho)\}$, which are orthonormal with respect to~\eqref{eq:diskinnerprod}. Once the $QR$ factorizations 
for $C$ and $R$ are known, the SVD is determined through standard techniques, as discussed in~\cite{Townsend_15_01}.  
  
In addition to providing a mathematically stable way to compute~\eqref{eq:fronorm}, the weighted SVD gives the 
best rank $K$ approximation to $f$ with respect to the $L_2$ inner product on the disk. Unfortunately, the 
use of the weighted SVD as a low-rank approximation method is limited because the rank $1$ terms in~\eqref{eq:SVD} may 
be discontinuous at the origin of the disk~\cite{wilber2016}, and consequently, the truncation of~\eqref{eq:SVD} may not be smooth. 
The SVD is accessed in Diskfun through the {\tt svd} command, and is used internally in the {\tt norm} command.
  
\subsection{Vector-valued functions and vector calculus on the disk} \label{sec:vcalc}
Vector-valued functions can also be constructed in Diskfun. These functions are represented with respect to the Cartesian coordinate 
basis vectors $\hat{\mathbf{i}}$ and $\hat{\mathbf{j}}$, since not all smooth vector fields defined over the disk have smooth components 
when represented with respect to the polar coordinate basis vectors, $\hat{\mathbf{r}}$ and $\hat{\boldsymbol{\theta}}$. For example, 
the vector field given by $\mathbf{f} = 0\hat{\mathbf{i}}+\hat{\mathbf{j}}$ is expressed as 
$\mathbf{f} = \sin\theta \hat{\mathbf{r}}+ \cos \theta \hat{\boldsymbol{\theta}}$ in polar coordinates, and both of these components are
discontinuous at the origin of the disk. 
 
Vector-valued functions are accessed in Diskfun through the creation of diskfunv objects. A diskfunv consists of two diskfun objects, 
one for each component of the vector-valued function. Algorithms involving diskfunv objects are implemented for algebraic actions, 
such as addition, as well as vector-based operations, such as the dot/cross products, and  divergence. Commands that map scalar-valued functions to vector-valued functions and vice-versa, such as { \tt grad(f)} 
and { \tt curl(f)}, are also included. In the latter case, the standard interpretations are used, i.e., $\nabla \times f = [ f_y, -f_x]$ 
for a scalar function $f$, and $\nabla \times \mathbf{u} = v_x-u_y$ when $\mathbf{u}=[u,v]$ is a vector-valued function. 
\begin{figure} 
\begin{center}
 \begin{minipage}{.4\textwidth}
  \includegraphics[width=0.8\textwidth]{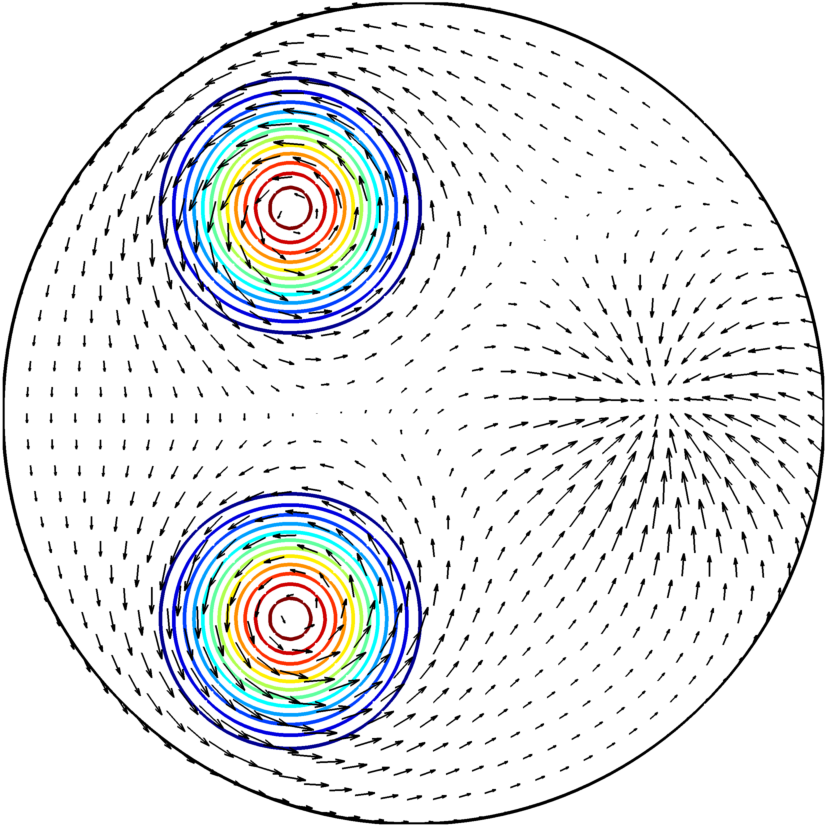}
 \end{minipage}
 \begin{minipage}{.4\textwidth}
  \includegraphics[width=0.8\textwidth]{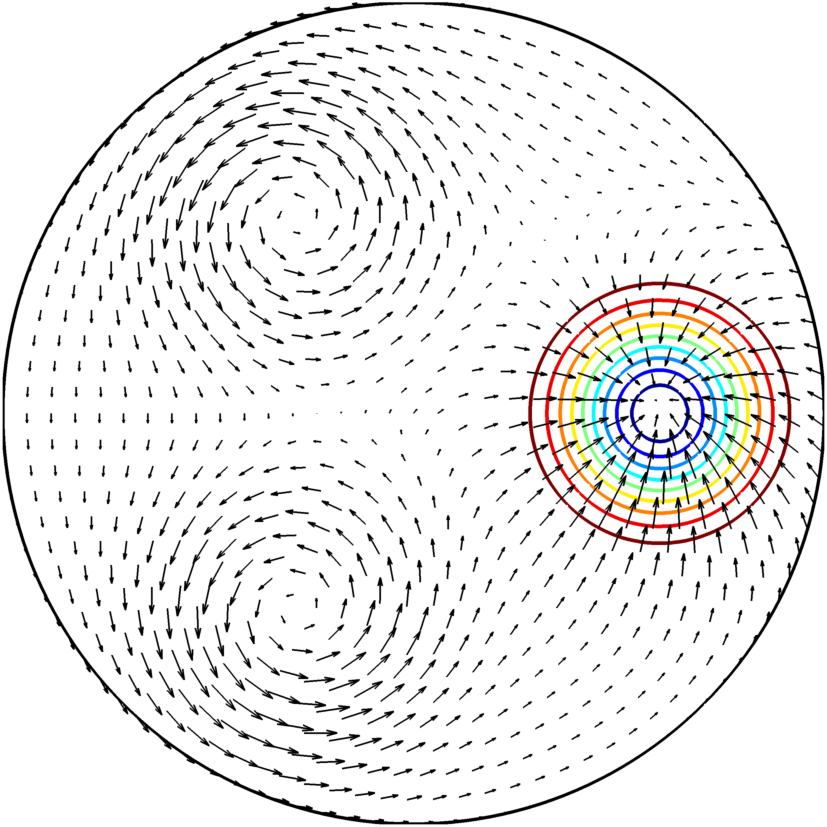}
 \end{minipage}
 \end{center}
 \caption{
The vector function $\mathbf{u} = \nabla \times \psi + \nabla \phi$, with $\psi$ and $\phi$ defined in~\eqref{eq:potentials}, together with its curl, $\nabla \times \mathbf{u}$ (left), and divergence, $\nabla \cdot \mathbf{u}$ (right). The field was plotted using {\tt quiver(u)}, while the curl and divergence were computed using {\tt curl(u)} and {\tt div(u)}, respectively, and plotted using the {\tt contour} command.}
\label{fig:vort_div} 
\end{figure}
As an example, consider the potential functions given by
\begin{equation} 
\begin{split} &\psi(x,y) = 10e^{-10(x+.3)^2-10(y+.5)^2}+10e^{-10(x+.3)^2-10(y-.5)^2} +15(1-x^2-y^2),\\  
&\phi(x,y) =  10e^{-10(x-.6)^2-40y^2},
\end{split} \label{eq:potentials} \end{equation}
and the vector field $\mathbf{u} = \nabla \times \psi + \nabla\phi$. This field
consists of the sum of a divergence-free term, $\nabla \times \psi$, and a curl-free term, $\nabla\phi$. Once $\psi$ and 
$\phi$ are constructed as diskfun objects, $\mathbf{u}$ can be constructed with a single line of code:{ \tt u = curl(psi)+grad(phi)}. 
Figure~\ref{fig:vort_div} displays a plot of $\mathbf{u}$ together with its curl and divergence.

\subsection{Miscellaneous operations}
Diskfun is included as an object class in Chebfun, and so has access to many of the operations in Chebfun. Operations that do 
not strictly require symmetry properties related to the geometry of the disk are computed using Chebfun2 with functions defined 
in polar coordinates~\cite{Chebfun2}. This includes optimization routines, such 
as {\tt min2}, {\tt max2}, and {\tt roots}, as well as procedures inspired by matrices such as {\tt trace} and {\tt lu}. Operations 
that use Chebfun2 are performed automatically, without requiring adjustments or intervention by the user. 
 
\section{A fast Poisson solver for computing solutions in low rank form}\label{sec:Poisson} 
In~\cite{wilber2016} and~\cite{shen2000new}, optimal complexity solvers for Poisson's equation on the disk are formulated through the use of parity properties associated with the Chebyshev--Fourier coefficients of BMC-II functions. 
Unfortunately, these solvers cannot capitalize on the low rank structure of the approximants in~\eqref{eq: low_rank_rep}, and they do not guarantee that the computed solution has good compression properties. Finding a low rank representation of the solution requires additional work, and such representations are essential in Diskfun. This has motivated the development of a fast Poisson solver that directly computes low rank approximations to solutions. 

Our method uses the factored alternating direction implicit (ADI) method~\cite{benner2009adi, li2002low} to work independently on the Chebyshev and Fourier coefficients in~\eqref{eq:1D_expansions}. We combine ADI with the Fourier and ultraspherical spectral methods, so that every linear system we solve is sparse and  spectral accuracy is guaranteed~\cite{olver2013fast}. We find that the ADI-based method efficiently constructs low rank solutions  whenever the numerical rank of the forcing function is sufficiently low.


Given a function $f(\theta, \rho)$ on the unit disk, we seek the solution $u(\theta, \rho)$ to Poisson's equation, 
$\nabla^2 u =f$, where $(\theta, \rho) \in [-\pi, \pi]\times [0, 1]$. To ensure a unique solution, Dirichlet conditions 
are prescribed as $u(\theta, 1) = g(\theta)$, where $g$ is a $2\pi$-periodic function.  In this section, we will assume that $g(\theta) = 0$.\footnote{Whenever $g(\theta)$ is nonzero, the system can be solved by relating it to a system with homogeneous boundary conditions (see~\cite[Ch. 6]{boyd2001chebyshev}).}

To enforce that the numerical solution $u$ is continuous over $u(\theta, 0)$, we apply the disk analogue to the DFS method and 
consider solving the related equation $\nabla^2 \tilde{u} =\tilde{f}$, where $\tilde{f}$ is the BMC-II extension of $f$ 
given by~\eqref{eq:BMCsym disk}.  The equation $\nabla^2 \tilde{u} =\tilde{f}$ is expressed in polar coordinates as   
\begin{equation}
 \rho^2 \frac{\partial^2 \tilde{u}}{\partial \rho^2} + \rho \frac{\partial \tilde{u}}{\partial \rho} + \frac{\partial^2 \tilde{u}}{\partial \theta^2} = \rho^2 \tilde{f}, \qquad (\theta, \rho)\in[-\pi,\pi]\times[-1, 1],
\label{eq:PoissonDiskBMC}
\end{equation} 
 where the standard formulation is multiplied by $\rho^2$ so that the variable 
 coefficients are low degree polynomials in $\rho$. It is straightforward to show that $\tilde{u}$ must also possess 
 BMC-II symmetry and therefore corresponds to a differentiable function on the disk. Restricting $\tilde{u}$ to $[-\pi, \pi]\times[0, 1]$ gives $u$.

To ensure that $\tilde{u}$ satisfies homogeneous boundary conditions, we will express it as a product of $1-\rho^2$ and an unknown function $\hat{u}$ . Expanding $\hat{u}$ in the Chebyshev--Fourier basis, we find that
\begin{equation}
\label{eq: FourierDirichlet}
  \tilde{u} (\theta,\rho) =  (1-\rho^2) \hat{u}(\theta, \rho) \approx (1-\rho^2)   \sum_{k=-n/2}^{n/2-1} \sum_{\ell=0}^{m-1} Y_{  \ell k }  T_{\ell}(\rho) e^{ik\theta},
\end{equation}
where $n$ is an even integer. 

We seek a low rank approximation to the Chebyshev--Fourier coefficient matrix $Y \in \mathbb{C}^{m \times n}$. Since~$1-\rho^2 =  (T_0(\rho) -  T_2(\rho))/2$, we can represent multiplication by $1-\rho^2$ in the Chebyshev basis with a sparse operator $M$. Then, $MY$ is the Chebyshev--Fourier coefficient matrix of $\tilde{u}$, i.e., $MY = X$ in~\eqref{eq:2Dfourierchebyshevdisk}.

To use ADI, the discretization of~\eqref{eq:PoissonDiskBMC} must be expressed as a Sylvester matrix equation of the form $AY-YB=C$, with the matrices $A \in \mathbb{C}^{m \times m}$ and $B \in \mathbb{C}^{n \times n}$ represented in a data-sparse way.  
 Plugging~\eqref{eq: FourierDirichlet} into~\eqref{eq:PoissonDiskBMC} and applying the chain rule, we rewrite~\eqref{eq:PoissonDiskBMC} with respect to $\hat{u}$: 
\begin{equation}
\label{eq:Lmop}
 \underbrace{\rho^2(1-\rho^2)\dfrac{\partial^2\hat{u} }{\partial \rho^2}+ (-5\rho^3+\rho)\dfrac{\partial \hat{u} }{\partial \rho}  -4\rho^2 \hat{u}}_{=\mathcal{L}}+ (1-\rho^2)\frac{\partial^2 \hat{u}}{\partial \theta^2} = \rho^2\tilde{f}.
\end{equation}
We now seek a discrete counterpart to the operator $\mathcal{L}$ that acts on the Chebyshev coefficients of $\hat{u}$.  To formulate such an operator, we apply a variant of the ultraspherical spectral method~\cite{olver2013fast}. This method uses recurrence relations between the Chebyshev and other ultraspherical polynomials to define sparse differential operators. Applying the ultraspherical spectral method directly results in a discretization of $\mathcal{L}$ that is sparse and banded. 
However, the bandwidth of this operator can be further reduced if we use a recurrence relation between the Chebyshev polynomials of the first and second kind that involves the term $1-\rho^2$.  Using~\cite[(18.9.10)]{NIST}, we have that 
\begin{equation}
\label{eq:ident}
(1-\rho^2)\dfrac{d^2}{d \rho^2} T_\ell(\rho) = -\ell(\ell+1)\left( \frac12U_\ell(\rho) - \frac12U_{\ell-2}(\rho)\right) +\ell U_\ell(\rho),  \qquad \ell \geq 2, 
\end{equation}
where $\{ U_\ell\}$ are the Chebyshev polynomials of the first kind. We use~\eqref{eq:ident} to define a discrete operator $D_{2(1)}$ that represents $(1-\rho^2) \partial^2/\partial \rho^2$.  Like all differentiation operators in the the ultraspherical spectral method, $D_{2(1)}$ acts on coefficients in one basis and converts them to another. Specifically, it acts on Chebyshev coefficients and returns coefficients in the $\{U_\ell\}$ basis. 
The remaining terms in $\mathcal{L}$ are expressed using standard techniques in the ultraspherical spectral method, and the resulting discretization of $\mathcal{L}$, denoted as $L$, is a banded matrix of bandwidth 4.  

We will use $L$ and the differentiation matrix 
\[D_{F}^2 = \textnormal{diag}\left(-(\tfrac{n}{2})^2, -(\tfrac{n-1}{2})^2, \cdots, 0, -1, -4, \cdots , -(\tfrac{n-1}{2})^2\right),\] which discretizes $\partial^2 /\partial \theta^2$ and acts on Fourier coefficients, to write the discretization of~\eqref{eq:Lmop} as a generalized Sylvester equation:
\begin{equation} \label{eq:sylvesterPoisson}
  LY+ S_1MYD_{F}^2 = S_1M_{\textnormal{\tiny{$\rho^2$}}} F .
\end{equation}
Recall that the matrix $M$ is an operator representing multiplication by $1-\rho^2$ .\footnote{Note that in the first term, multiplication by $1-\rho^2$ occurs implicitly via~\eqref{eq:Lmop}.} The tridiagonal matrix $S_1$ converts coefficients in the Chebyshev basis to the $\{ U_{\ell} \}$ basis; this is required due to the action of $L$ (see~\cite{olver2013fast}).  On the right-hand side, $F$ is the Chebyshev--Fourier matrix of coefficients for \smash{$\tilde{f}$} , and \smash{$M_{ \textnormal{\tiny{$\rho^2$}}} $} is a tridiagonal matrix representing multiplication by $\rho^2$. 

To apply ADI, we must write~\eqref{eq:sylvesterPoisson} in the following form:
\begin{equation}
\label{eq:dispPoisson}
 \underbrace{(S_1M)^{-1} L}_{=A}Y- Y\underbrace{(- D_{F}^2)}_{=B} = \underbrace{M^{-1}M_{\textnormal{\tiny{$\rho^2$}}}  F}_{=C}
 \end{equation}
The matrices $L$ and  $S_1M$ are each banded with a bandwidth of 4, and  $B$ is diagonal. 
We solve~\eqref{eq:sylvesterPoisson} by applying the factored ADI method in~\cite{benner2009adi}.  This method never requires $F$ to be formed explicitly. Rather, it operates directly on the low rank factorization of $F$ described in Section~\ref{sec:diskcoeffs}. The solution is returned as a low rank factorization,  $Y = ZDG^*$, where $Z$ is a collection of Chebyshev coefficients, $D$ is diagonal, and $G$ is a collection of Fourier coefficients. 

ADI is an iterative method, and the convergence of the method is sensitive to the selection of a set of shift parameters~\cite{ lu1991solution, sabino2006solution}. The spectrum of $A$ in~\eqref{eq:dispPoisson}, denoted as $\sigma(A)$, can be contained in an interval on the real line that is well-separated from the interval containing $\sigma(B)$.  
In such a scenario, near-optimal shift parameters are known and can efficiently be computed~\cite{sabino2006solution}.

The computational cost of ADI is dependent on the  rank of the matrix $C$ and properties of the matrices $A$ and $B$.  
If $A$ and $B$ were normal, one could directly apply bounds given in~\cite{lu1991solution, beckermann2016singular, sabino2006solution} to find the maximum number of ADI iterations required for approximating $Y$ to within the tolerance $\varepsilon$.\footnote{ Bounds are also supplied in~\cite{beckermann2016singular} and~\cite{sabino2006solution} for the case of non-normal $A$ and $B$ through the use of pseudospectra and fields of values, respectively. In our case, the matrix $V$ in the eigendecomposition $A = V\Lambda V^{-1}$ is well-conditioned, and we therefore only require a slight generalization on the bounds  supplied for normal operators. } 
However, $A$ is not a normal matrix. Fortunately, the matrix $V$ in the eigendecomposition $A = V\Lambda V^{-1}$ is well-conditioned, with $\kappa_2(V) = \|V\|_2\|V^{-1}\|_2$  growing approximately quadratically with $m$. We apply the bound for normal matrices given in~\cite{beckermann2016singular} to the eigendecomposition of $A$ and find that we require at most $N$ steps of ADI, where $N =  \left\lceil \pi^{-2}\log(4 \kappa_2(V)  / \varepsilon)\log(16\gamma)\right\rceil $. Here, $\gamma$, described in Corollary 4.2 of~\cite{beckermann2016singular}, is a function of $\sigma(A)$ and $\sigma(B)$.  Empirically, we observe that $\gamma$ grows slightly faster than quadratically as $(m+n)$ increases.  If $C$ is of rank $K$, then each iteration of ADI requires $2K$ sparse, linear solves, so the total cost for performing factorized ADI on~\eqref{eq:dispPoisson} is $\mathcal{O}(NK(m+n))$.

The ADI method results in an overestimation of the numerical rank of $Y$. This is remedied by applying a compression step on the factorization $Y = ZDG^*$ via the SVD, at a computational cost of \smash{$\mathcal{O}( (NK)^2(m+n) +(NK)^3)$}. Accounting for the logarithmic growth of $N$, the overall cost of our procedure is $\mathcal{O}(K^2 (n+m) (\log(n)\log(m))^2  + K^3(\log(n)\log(m))^3)$.

In contrast, optimal complexity methods that ignore the numerical rank of $\tilde{f}$ find  a low rank approximation to $Y$ in $\mathcal{O}(mn \log mn + \tilde{K}^3 + \tilde{K}^2(m+n))$, where $\tilde{K}$ is the numerical rank of $Y$. This is because one can decouple~\eqref{eq:dispPoisson} and find the coefficient matrix $Y$ in $\mathcal{O}(mn)$ operations. 
A low rank approximation to $Y$ can then be constructed by retrieving  the function values associated with $Y$ via the FFT, and then performing BMC structure-preserving GE.

The ADI-based method is beneficial  when the numerical rank of $Y$ is sufficiently small, and in practice, we use the alternative solver described in~\cite{wilber2016} whenever ADI is not advantageous. Figure~\ref{fig:FastPoissonSolver} (left) compares the rate at which these two methods construct a low rank approximation, represented as a diskfun object, to the solution of~\eqref{eq:PoissonDiskBMC}. For choices of $\tilde{f}$ with various numerical ranks, we plot the wall clock time in seconds against increasingly large values of $n$, with $m=2n+1$. The alternative solver, which is insensitive to the rank of $\tilde{f}$, is represented in black. The ADI-based method proves effective for moderate-sized problems ($n=1048$) when the rank of $\tilde{f}$ is below $10$, performing up to $5$ times faster than the alternative method. With $n = 10,\! 000$ and $\tilde{f}$ of numerical rank $5$,  the ADI solver constructs a low rank solution in under 5 seconds.\footnote{Timings were performed in MATLAB R2016a on a 2015 Macbook Pro with no explicit parallelization. The degrees of freedom used in this experiment were increased artificially to demonstrate asymptotic complexity. } 
 \begin{figure} 
  \begin{minipage}{.55\textwidth} 
  \begin{overpic}[width=\textwidth]{WoodburyVsLowRank.eps}
  \put(1,18){\rotatebox{90}{\small{elapsed time (secs)}}}
  \put(50,0){$n$}
  \end{overpic}
  \end{minipage}
  \begin{minipage}{.44\textwidth} 
  \begin{overpic}[width=\textwidth]{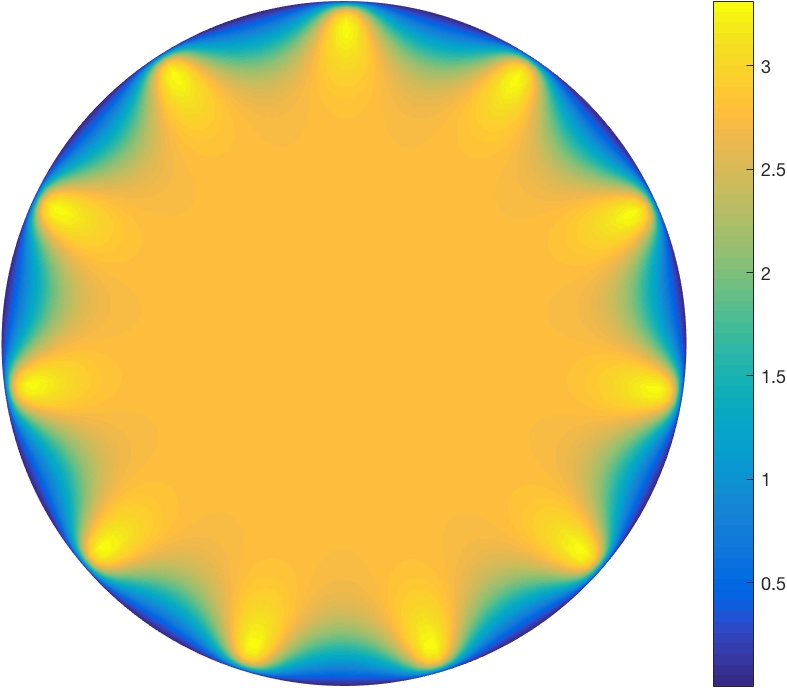}
  \end{overpic}
  \end{minipage}
  \caption{Left: Comparison of the execution (wall clock) time for the ADI-based Poisson solver and an optimal complexity solver that does not account for low rank structures (black), as a function of $n$, where the  problem size is $(2n +1)\times n$.  Timings include the construction of a diskfun object. 
  Right: Solution to $\nabla ^2 u = f$ with boundary condition $u(\theta, 1) = 0$, where $f$ is given in~\eqref{eq:PoissonRHS}. }
  \label{fig:FastPoissonSolver}
 \end{figure}
 
Our solver is implemented in Diskfun in an integrated way: The output returned is automatically represented as a diskfun object, and can therefore immediately be visualized or operated on using Diskfun commands. For example, Figure~\ref{fig:FastPoissonSolver} (right) displays the solution to $\nabla^2 u = f$ computed with the {\tt poisson} command in Diskfun. 
Here, $f$ is numerically a rank $16$ function, given by
\begin{equation}
f(\theta, \rho) = e^{-40(\rho^2-1)^4}\sinh\big(5-5\rho^{11}\cos(11\theta-11/\sqrt{2})\big), 
\label{eq:PoissonRHS}
\end{equation} 
and the boundary condition is $u(\theta, 1) = 0$. 
  
\section{Conclusions}\label{sec:concl}
The analogue of the double Fourier sphere (DFS) method for functions on the unit disk provides a useful structure that is 
retained through a new iterative Gaussian elimination procedure on functions. We use this concept to construct low 
rank approximations to functions on the disk that facilitate fast and stable 
computations based on the FFT. Fast and spectrally accurate algorithms exploiting low rank structures are described for several operations, including differentiation, integration, vector calculus, and the solving of Poisson's equation. We have
implemented these ideas in Diskfun, which is part of the publicly available, open-source software  Chebfun. This allows 
investigators to compute with functions in polar geometries in an intuitive, accurate, and highly efficient way, without concern for the underlying  discretization procedure.  

\section*{Acknowledgments}
We are grateful to Nick Trefethen for his detailed comments on a draft of the paper. We thank Nick Hale and Stefan G\"{u}ttel for observations 
concerning the computation of the weighted SVD in Section~\ref{sec:svd}, and Jared Aurentz for valuable feedback. We thank Behnam Hashemi and the Chebfun team for reviewing the Diskfun software. We thank the editor and referees for their valuable comments, and are particularly appreciative of an anonymous reviewer of~\cite{townsend2015computing}, whose comments motivated the development of our fast disk Poisson solver. 

\bibliographystyle{siam}
\bibliography{diskfun}

\end{document}